\numberwithin{equation}{section}   
\definecolor{rblue}{rgb}{.255,.41,.884} 
\newcommand{\integers}{\Z}
\newcommand{\textdef}[1]{\textit{#1}\index{#1}}
\newtheorem{theorem}{Theorem}[section]
\newtheorem{lemma}[theorem]{Lemma}
\newtheorem{definition}[theorem]{Definition}
\newtheorem{prop}[theorem]{Proposition}
\newtheorem{lem}[theorem]{Lemma}
\newtheorem{remark}[theorem]{Remark}
\def\eqref#1{{\normalfont(\ref{#1})}}
\def\R{\mathbb{R}}
\def\Z{\mathbb{Z}}
\def\Rn{\mathbb{R}^n}
\def\Ss{\mathcal{S}}
\newcommand{\Sn}[1][]{\mathcal{S}^{\ifthenelse{\equal{#1}{}}{n}{#1}}}
\newcommand{\Sk}[1][]{\mathcal{S}^{\ifthenelse{\equal{#1}{}}{k}{#1}}}
\newcommand{\Snp}[1][]{\mathcal{S}_+^{\ifthenelse{\equal{#1}{}}{n}{#1}}}
\newcommand{\Skp}[1][]{\mathcal{S}_+^{\ifthenelse{\equal{#1}{}}{k}{#1}}}
\newcommand{\Snpp}[1][]{\mathcal{S}_{++}^{\ifthenelse{\equal{#1}{}}{n}{#1}}}
\newcommand{\EE}{{\mathcal E} }
\newcommand{\DD}{{\mathcal D} }
\newcommand{\GG}{{\mathcal G} }
\newcommand{\OO}{{\mathcal O} }
\newcommand{\TT}{{\mathcal T} }
\newcommand{\ZZ}{{\mathcal Z} }
\newcommand{\LP}{{\bf{\rm LP\,}}}
\DeclareMathOperator{\kvec}{{vec}}
\DeclareMathOperator{\trace}{{trace}}
\DeclareMathOperator{\ext}{{ext}}
\DeclareMathOperator{\cut}{{cut}}
\DeclareMathOperator{\tr}{{trace}}
\DeclareMathOperator{\Diag}{{Diag}}
\DeclareMathOperator{\diag}{{diag}}
\DeclareMathOperator{\Mat}{{Mat}}
\DeclareMathOperator{\relint}{{relint}}
\newcommand{\Mcal}{{\mathcal M}}
\newcommand{\NN}{{\mathcal N}}
\newcommand{\A}{{\mathcal A}}
\newcommand{\MM}{{\mathcal M}}
\newcommand{\QQ}{{\mathcal Q}}
\newcommand{\bem}{\begin{pmatrix}}
\newcommand{\bemp}{\begin{pmatrix}}
\newcommand{\bemb}{\begin{bmatrix}}
\newcommand{\eem}{\end{pmatrix}}
\newcommand{\eemp}{\end{pmatrix}}
\newcommand{\eemb}{\end{bmatrix}}
\newcommand{\beq}{\begin{equation}}
\newcommand{\bet}{\begin{table}}
\newcommand{\eeq}{\end{equation}}
\newcommand{\beqr}{\begin{eqnarray}}
\newcommand{\bpr}{\begin{proof}}
\newcommand{\epr}{\qquad \end{proof}}
\newcommand{\nc}{\newcommand}
\nc{\arrow}{{\rm arrow\,}}
\nc{\Arrow}{{\rm Arrow\,}}
\nc{\BoDiag}{{\rm B^0Diag\,}}
\nc{\bodiag}{{\rm b^0diag\,}}
\nc{\Mmn}{{\mathcal M}_{m,n} }
\nc{\kwqqp}{Q{$^2$}P\,}
\nc{\kwqqps}{Q{$^2$}Ps}
\nc{\notinaho}{(X,S)\in \overline{AHO}(\A)}
\nc{\inaho}{(X,S)\in AHO(\A)}
\nc{\In}{\frac 1{\sqrt n} I}
\newcommand{\bea}{\begin{eqnarray}}%
\newcommand{\eea}{\end{eqnarray}}%
\newcommand{\beas}{\begin{eqnarray*}}%
\newcommand{\eeas}{\end{eqnarray*}}%
\newcommand{\Rnk}{\R^{n \times k}}%
\renewcommand{\S}{\mathcal{S}}
\title{
Eigenvalue, Quadratic Programming,
 and Semidefinite Programming Relaxations\\
      for\\
a Cut Minimization Problem
\footnote{Presented at Retrospective Workshop on
Discrete Geometry, Optimization and Symmetry, November 24-29, 2013,
Fields Institute, Toronto, Canada.}
}
\author{
\href{http://www.cs.ubc.ca/~tkpong/}
  {Ting Kei Pong}
        \thanks{Department of Applied Mathematics,
        the Hong Kong Polytechnic University, Hung Hom, Hong Kong.
        This author was also supported as a PIMS postdoctoral fellow at Department of Computer Science, University of British Columbia, Vancouver, during the early stage of the preparation of the manuscript.
        Email: {tk.pong@polyu.edu.hk}
                }
\and
  {Hao Sun}
        \thanks{Department of Combinatorics and Optimization,
                University of Waterloo, Ontario N2L 3G1, Canada.
              Research supported by an Undergraduate Student Research
Award from The Natural Sciences and Engineering
                Research Council of Canada.
        Email: {hao\_sun@live.com}
                }
\and
\href{http://orion.math.uwaterloo.ca/~spschurr/}
{Ningchuan Wang}
        \thanks{
              Research supported by The Natural Sciences and Engineering
                Research Council of Canada
 and by the U.S. Air Force Office of Scientific Research.
                Email: {wangningchuan1987@hotmail.com}}
\and
       \href{http://orion.math.uwaterloo.ca/~hwolkowi/}
{Henry Wolkowicz}
        \thanks{Research supported in part by The Natural Sciences and Engineering
                Research Council of Canada
 and by the U.S. Air Force
Office of Scientific Research.
                Email: {hwolkowicz@uwaterloo.ca}}
}
\date{\today}
\begin{document}
\bibliographystyle{plain}
          \maketitle
\begin{center}
\href{http://www.uwaterloo.ca/}
{University of Waterloo}\\
\href{http://www.math.uwaterloo.ca/CandO_Dept/homepage.html}
{Department of Combinatorics \& Optimization}\\
          Waterloo, Ontario N2L 3G1, Canada\\
          Research Report\\
\end{center}


{\bf Key words and phrases:}
vertex separators, eigenvalue bounds, semidefinite programming bounds,
graph partitioning, large scale.

{\bf AMS subject classifications:} 05C70, 15A42, 90C22, 90C27, 90C59


\begin{abstract}
We consider the problem of partitioning the node set of a graph into $k$
sets of given sizes in order to \emph{minimize the cut} obtained using
(removing) the $k$-th set. If the resulting cut has value $0$, then we
have obtained a vertex separator.
This problem is closely related to the graph
partitioning problem. In fact, the model we use is the same as that for
the graph partitioning problem except for a different
\emph{quadratic} objective function.
We look at known and new bounds obtained from various relaxations for
this NP-hard problem. This includes: the standard eigenvalue bound,
projected eigenvalue bounds using both the adjacency matrix and the
Laplacian, quadratic programming (QP) bounds based on recent successful
QP bounds for the quadratic assignment problems,
and semidefinite programming bounds. We include numerical tests for
large and \emph{huge} problems that illustrate the efficiency of the
bounds in terms of strength and time.

\end{abstract}

\tableofcontents
\listoftables
\listoffigures

\section{Introduction}
\label{sect:intro}
We consider a special type of \textdef{minimum cut problem, MC}.
\index{MC, minimum cut problem}
The problem consists in partitioning the node set of a graph into $k$
sets of given sizes in order to \emph{minimize the cut} obtained by
removing the $k$-th set. This is achieved by minimizing the
number of edges connecting distinct sets after removing the $k$-th set, as described in \cite{ReLiPi:13}.
This problem arises when finding a re-ordering to bring the sparsity pattern of a large sparse positive definite matrix into a block-arrow shape so as to minimize fill-in in its Cholesky factorization.
The problem also arises as a subproblem of the \textdef{vertex separator
problem, VS}.\index{VS, vertex separator problem}
In more detail, a vertex separator is a
set of vertices whose removal from the graph results in a disconnected
graph with $k-1$ components. A typical VS problem
has $k=3$ on a graph with $n$ nodes, and it seeks a vertex separator which is optimal subject to some constraints on the
partition size.
This problem can be solved
by solving an MC for each possible partition size. Since there are at most
$\binom{n-1}{2}$ $3$-tuple integers that sum up to $n$, and it is known that
VS is NP-hard in general \cite{RHLewis:14,ReLiPi:13}, we see that MC is also NP-hard
when $k\ge 3$.


Our MC problem is closely related to the
\textdef{graph partitioning problem, GP}, which is also NP-hard; see the
discussions in \cite{RHLewis:14}.
In both problems one can use a model
with a \emph{quadratic} objective function over the set of
\textdef{partition matrices}.  The model we use is the same as that for
GP except that the quadratic objective function is different.
We study both existing and new bounds and provide both theoretical
properties and empirical results. Specifically, we adapt and improve known techniques for deriving lower bounds for GP to derive bounds for MC.
We consider eigenvalue bounds, a convex quadratic programming, QP, lower bound, as well as lower bounds based on semidefinite programming, SDP,\index{semidefinite programmming, SDP}
\index{SDP, semidefinite programmming} relaxations.

%
%

We follow the approaches in \cite{HaReWo:89,ReLiPi:13,ReWo:90}
for the eigenvalue bounds. In particular, we replace the standard
quadratic objective function for GP,
e.g.,~\cite{HaReWo:89,ReWo:90} with that used in \cite{ReLiPi:13} for
MC. It is shown in \cite{ReLiPi:13} that
one can equally use either the adjacency matrix $A$ or the negative
Laplacian $(-L)$ in the objective function of the model.
We show in fact that one can use $A-\Diag(d), \forall d\in \Rn$, in the
\index{$\Diag$}
model, where $\Diag(d)$ denotes the diagonal matrix with diagonal $d$.
However, we emphasize and show that this is no longer true for the
eigenvalue bounds and that using $d=0$ is, empirically, stronger.
Dependence of the eigenvalue lower bound on diagonal perturbations was
also observed for the quadratic assignment problem, QAP, and GP,
see e.g.,~\cite{ReWo:89,FaReWo:92}.
In addition, we find a new projected eigenvalue lower bound using $A$ that has
three terms that can be found explicitly and efficiently. We illustrate
this empirically on large and huge scale sparse problems.

Next, we extend the approach in
\cite{AnsBrix:99,AnWo:98,BrixiusAnstr:01} from
the QAP to MC.
This allows for a QP bound that is based on
SDP duality and that can be solved
efficiently. The discussion and derivation of this lower bound is new
even in the context of GP.
Finally, we follow and extend the
approach in \cite{WoZh:96} and derive and test SDP relaxations. In
particular, we answer a question posed in \cite{WoZh:96} about redundant
constraints. This new result simplifies the SDP relaxations even in the
context of GP.


\subsection{Outline}
We continue in Section \ref{sect:prel} with
preliminary descriptions and results on our special MC.
This follows the approach in \cite{ReLiPi:13}.
In Section \ref{sect:eigs} we outline the basic eigenvalue bounds
and then the projected eigenvalue bounds following
the approach in \cite{HaReWo:89,ReWo:90}. Theorem \ref{thm:projeig}
includes the projected bounds along with our new three part eigenvalue
bound. The three part bound can be calculated explicitly and efficiently
by finding $k-1$ eigenvalues and a minimal scalar product, and making
use of the result in Section \ref{sect:expllin}.
The QP bound is described in Section
\ref{sect:qpbnd}. The SDP bounds are presented in Section \ref{sect:sdpbnd}.

Upper bounds using feasible solutions are given in Section
\ref{sect:feassoln}. Our numerical tests are in Section
\ref{sect:numertests}. Our concluding remarks are in Section \ref{sect:concl}.

\section{Preliminaries}
\label{sect:prel}
We are given an undirected graph ${\sf G}=(N,E)$ with
\index{graph! ${\sf G}$}
\index{${\sf G}$, graph}
a nonempty node set $N=\{1,\ldots,n\}$ and
\index{$N=\{1,\ldots,n\}$}
\index{graph! node set, $N=N({\sf G})$}
a nonempty edge set $E$.
In addition, we have a positive integer vector of set
\index{graph! edge set, $E=E({\sf G})$}
sizes $m=(m_1,\ldots,m_k)^T \in \integers^k_{+}$, $k>2$,
\index{$m$, set sizes}
\index{set sizes, $m$}
such that the sum of the components $m^Te=n$. Here $e$ is the vector of ones
of appropriate size.
Further, we let $\Diag(v)$ denote the diagonal matrix
formed using the vector $v$;
the adjoint $\diag(Y)=\Diag^*(Y)$ is the vector formed from the diagonal
of the square matrix $Y$. We let $\ext(K)$ represent the extreme
points of a convex set $K$.
We let $x=\kvec (X)\in \R^{nk}$ denote the vector formed
(columnwise) from the matrix $X$; the adjoint and inverse is $\Mat(x)\in
\Rnk$. We also let $A \otimes B$ denote the Kronecker product; and
$A \circ B$ denote the Hadamard product.
\index{$A \circ B$, Hadamard product}
\index{Hadamard product, $A \circ B$}
\index{$A \otimes B$, Kronecker product}
\index{Kronecker product, $A \otimes B$}
\index{$\Rnk$, $n\times k$ matrices}
\index{$\kvec(X)$, vector from matrix}
\index{vector from matrix, $\kvec(X)$}
\index{$\Mat(x)$, matrix from vector}
\index{matrix from vector, $\Mat(x)$}
\index{$\cdot^*$, adjoint}
\index{adjoint, $\cdot^*$}
\index{$ext$, extreme points}
\index{extreme points, $ext$}
\index{$\Diag$}
\index{$\diag$}
\index{$e$, vector of ones}
\index{vector of ones, $e$}

We let
\[
P_m:=\left\{ (S_1,\ldots,S_k): S_i\subset N, |S_i|=m_i, \forall i,
S_i\cap S_j=\emptyset,
\text{ for } i\neq j, \cup_{i=1}^k S_i=N
   \right\}
\]
denote the set of all \emph{partitions of $N$} with the appropriate sizes
specified by $m$.
The partitioning is encoded using an $n\times k$
\emph{partition matrix} $X\in
\R^{n\times k}$ where the column $X_{:j}$ is the incidence vector for
the set $S_j$
\index{partitions}
\index{$P_m$, set of all partitions}
\index{partitions! $P_m$, set of all partitions}
\index{partitions! set of all partitions, $P_m$}
\index{partitions! partition matrix, $X$}
\[
X_{ij}=\left\{
\begin{array}{cc}
1 & \text{if } i \in S_j \\
0 & \text{otherwise}.
\end{array}
\right.
\]
Therefore, the set cardinality constraints are given by $X^Te=m$; while the
constraints that each vertex appears in exactly one set is given by
$Xe=e$.
\index{partitions! set of all partition matrices, $\Mcal_m$}
\index{$\Mcal_m$, set of all partition matrices}

The set of partition matrices can be represented using various linear and quadratic constraints. We
present several in the following. In particular, we phrase
the linear equality constraints as quadratics for use in the Lagrangian
relaxation below in Section \ref{sect:sdpbnd}.
\begin{definition}
We denote the set of zero-one, nonnegative, linear equalities,
doubly stochastic type,
$m$-diagonal orthogonality type,
$e$-diagonal orthogonality type,
and gangster constraints as, respectively,
\index{constraints}
\index{constraints! $\DD_O$, m-diag. orthogonality}
\index{constraints! $\DD_e$, e-diag. orthogonality}
\index{constraints! $\DD$, doubly stochastic type}
\index{constraints! $\EE$, linear equalities}
\index{constraints! $\ZZ$, zero-one}
\index{constraints! $\NN$, nonnegativity}
\index{constraints! $\GG$, gangster set}
\[
\begin{array}{rcl}
\ZZ  &:=& \{ X \in \R^{n\times k}: X_{ij}\in \{0,1\}, \forall ij\}
         =\{ X \in \R^{n\times k}: \left(X_{ij}\right)^2= X_{ij}, \forall ij\}
           \\
\NN  &:=& \{ X \in \R^{n\times k}: X_{ij}\geq  0, \forall ij\}   \\
\EE &:=& \{ X \in \R^{n\times k}: Xe=e,X^Te=m\}
     = \{ X \in \R^{n\times k}: \|Xe-e\|^2+\|X^Te-m\|^2=0\}
 \\
\DD &:=& \{ X \in \R^{n\times k}: X\in \EE \cap \NN \}   \\
\DD_O &:=& \{ X \in \R^{n\times k}: X^TX=\Diag(m)\}   \\
\DD_e &:=& \{ X \in \R^{n\times k}: \diag(XX^T)=e\}   \\
\GG &:=& \{ X \in \R^{n\times k}: X_{:i}\circ X_{:j} =0, \forall i\neq j \}   \\
\end{array}
\]
\end{definition}

There are many equivalent ways of representing the set of all
partition matrices. Following are a few.
\begin{prop}
\label{prop:partmatrices}
The set of partition matrices in $\Rnk$ can be expressed as the
following.
\begin{equation}
\label{eq:partmatrices}
\begin{array}{rcl}
\Mcal_m
&=& \EE \cap \ZZ
\\&=& \ext(\DD)
\\&=&  \EE \cap \DD_O \cap \NN 
\\&=&  \EE \cap \DD_O \cap \DD_e \cap \NN 
\\&=&  \EE \cap \ZZ \cap \DD_O \cap \GG \cap \NN.
\end{array}
\end{equation}
\end{prop}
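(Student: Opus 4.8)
**The plan is to establish the chain of equalities in (\ref{eq:partmatrices}) by a sequence of mutual inclusions, moving from the most elementary characterization to the most constrained one.** The first identity $\Mcal_m = \EE \cap \ZZ$ is essentially the definition: a $0/1$ matrix $X$ satisfying $Xe=e$ and $X^Te=m$ has exactly one $1$ per row (so each vertex lies in exactly one set) and exactly $m_j$ ones in column $j$ (so $|S_j|=m_j$), which is precisely a partition matrix; conversely any partition matrix has these properties. For $\Mcal_m = \ext(\DD)$ I would argue that $\DD = \EE \cap \NN$ is a (bounded) polytope — the transportation/doubly-stochastic-type polytope with row sums $e$ and column sums $m$ — and that its vertices are exactly the integral points, hence the $0/1$ matrices in $\EE$. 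The cleanest route is: any $X\in\ext(\DD)$ must be $0/1$ (a standard argument: if some entry is strictly between $0$ and $1$, one can find a cycle in the bipartite support graph along which to perturb $\pm\epsilon$, contradicting extremality), and conversely every matrix in $\EE\cap\ZZ$ is extreme in $\DD$ since it is a $0/1$ point of a polytope contained in $[0,1]^{nk}$.

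**Next I would handle the three characterizations involving $\DD_O$, $\DD_e$, and $\GG$, again by showing each refinement neither adds nor removes points relative to $\EE\cap\ZZ\cap\NN$.** For $\EE\cap\DD_O\cap\NN$: if $X\in\Mcal_m$ then $X^TX = \Diag(m)$ because distinct columns are incidence vectors of disjoint sets (so their inner product is $0$) and $X_{:j}^TX_{:j}=|S_j|=m_j$; conversely, if $X\in\EE\cap\NN$ and $X^TX=\Diag(m)$, then the diagonal entries force $\sum_i X_{ij}^2 = m_j$ while $\sum_i X_{ij} = m_j$ (column sum), and combined with $X\ge 0$ and the row-sum constraint $Xe=e$ (so each $X_{ij}\le 1$), the equality $\sum_i X_{ij}^2 = \sum_i X_{ij}$ with $0\le X_{ij}\le 1$ forces every $X_{ij}\in\{0,1\}$; hence $X\in\EE\cap\ZZ\cap\NN=\Mcal_m$. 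Adding $\DD_e$ (the constraint $\diag(XX^T)=e$, i.e.\ each row has unit Euclidean norm-squared) is then automatically satisfied by any partition matrix, so $\EE\cap\DD_O\cap\DD_e\cap\NN=\EE\cap\DD_O\cap\NN$ as well. Finally, the gangster constraint $\GG$ (columnwise Hadamard products vanish, $X_{:i}\circ X_{:j}=0$ for $i\ne j$) says no vertex lies in two sets simultaneously; for a $0/1$ matrix already satisfying $Xe=e$ this is automatic, so $\EE\cap\ZZ\cap\DD_O\cap\GG\cap\NN = \EE\cap\ZZ\cap\NN$, which we have identified with $\Mcal_m$ (noting $\ZZ\subseteq\NN$ trivially, so the extra $\NN$ is harmless).

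**I expect the main obstacle to be the $\ext(\DD)$ characterization**, since it is the only one that is not a routine pointwise algebraic verification but requires a genuine polyhedral-combinatorics argument: showing that every extreme point of the transportation-type polytope $\DD$ is integral. The standard proof uses the total unimodularity of the bipartite incidence structure (or, equivalently, the cycle-cancelling perturbation argument sketched above), and one must be slightly careful that the row sums are $e$ and the column sums are the possibly-nonuniform integer vector $m$ — but total unimodularity of the constraint matrix together with integrality of the right-hand side $(e;m)$ still guarantees integral vertices, and boundedness of $\DD$ (it sits inside $[0,1]^{nk}$) ensures it has vertices and equals the convex hull of them. All the remaining equalities reduce to the elementary observation that for $0\le t\le 1$ one has $t^2=t \iff t\in\{0,1\}$, applied coordinatewise, together with the combinatorial reading of each constraint set given in the Definition preceding the Proposition.
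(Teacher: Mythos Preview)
Your proposal is correct and follows essentially the same approach as the paper. The paper's own proof is terse: it notes the first equality is definitional, cites the Birkhoff--Von Neumann transportation-polytope result for the second, refers to \cite[Prop.~1]{ReLiPi:13} for the third, and dismisses the fourth and fifth as containing redundant constraints; your argument simply unpacks each of these steps explicitly (in particular, your direct proof of $\EE\cap\DD_O\cap\NN\subseteq\Mcal_m$ via $\sum_i X_{ij}^2=\sum_i X_{ij}$ with $0\le X_{ij}\le 1$ is exactly what the cited reference does).
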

\begin{proof}
The first equality follows immediately from the definitions.
The second equality follows from the transportation type constraints and
is a simple consequence of Birkhoff and Von Neumann theorems
that the extreme
points of the set of doubly stochastic matrices are the permutation
matrices, see e.g.,~\cite{MR874114}.  The third equality
is shown in \cite[Prop. 1]{ReLiPi:13}. The fourth and fifth
equivalences contain redundant sets of constraints.
\end{proof}

We let $\delta(S_i,S_j)$ denote the set of edges between the sets of nodes
\index{$\delta(S_i,S_j)$, set of edges between $S_i,S_j$} $S_i,S_j$, and
we denote the set of edges with endpoints in distinct partition sets
$S_1, \ldots, S_{k-1}$ by
\begin{equation}
\label{eq:deltaS}
\delta(S)= \cup_{i<j<k} \delta(S_i,S_j).
\end{equation}
The minimum of the cardinality $|\delta(S)|$ is denoted
\begin{equation}
\label{eq:cutS}
\cut(m) = \min \{ |\delta(S)| : S \in P_m \}.
\index{$\cut(S)$}
\end{equation}
The graph ${\sf G}$ has a \emph{vertex separator} if there exists an
$S\in P_m$ such that the removal of set $S_k$ results in the sets
$S_1, \ldots, S_{k-1}$ being pairwise disjoint. This is equivalent to
\index{vertex separator, VS}
\index{VS, vertex separator}
$\delta(S)=\emptyset$, i.e.,~$\cut(m)=0$.
Otherwise, $\cut(m)>0$.\footnote{A discussion of the relationship of
$\cut(m)$ with the bandwidth of the graph is given in
e.g.,~\cite{ReLiPi:13,MR2363705,MR3060945}. Particularly, for $k=3$, if $\cut(m)>0$, then $m_3 + 1$ is a lower bound for the bandwidth.}

We define the $k \times k$ matrix
\index{$\S^k$, symmetric matrices}
\index{symmetric matrices, $\S^k$}
\index{trace inner-product}
\[
B:=
\begin{bmatrix}
ee^T -I_{k-1} & 0 \cr
0 & 0
\end{bmatrix} \in \S^k,
\]
where
$\S^k$ denotes the vector space of $k\times k$ symmetric matrices
equipped with the trace inner-product, $\langle S,T \rangle = \trace ST$.
We let $A$ denote the adjacency matrix of the graph and let
$L:=\Diag(Ae)-A$ be the Laplacian.

In \cite[Prop. 2]{ReLiPi:13}, it was shown that $|\delta(S)|$ can be represented in terms of a quadratic function of
the partition matrix $X$, i.e.,~as
$\frac 12 \trace (-L)XBX^T$ and $\frac 12 \trace AXBX^T$, where we note that
the two matrices $A$ and $-L$ differ only on the diagonal.
From their proof, it is not hard to see that their result can be slightly extended as follows.
\begin{prop}
\label{prop:partmatrix}
For a partition $S\in P_m$, let $X\in \MM_m$ be the associated
partition matrix. Then
\index{graph! Laplacian matrix, $L$}
\index{graph! $L$, Laplacian matrix}
\index{graph! adjacency matrix, $A$}
\index{$A$, adjacency matrix}
\index{graph! $A$, adjacency matrix}
\begin{equation}
\label{eq:objfn}
|\delta(S)|=   \frac 12 \trace \left(A-\Diag
(d)\right)XBX^T, \quad  \forall d\in \Rn.
\end{equation}
In particular, setting $d=0, Ae$, respectively yields $A, -L$.
\end{prop}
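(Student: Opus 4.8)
The plan is to reduce the statement to the case $d = 0$ (equivalently, the already-established identity for $A$ from \cite[Prop.~2]{ReLiPi:13}) by showing that the diagonal perturbation $\Diag(d)$ contributes nothing to the quadratic form $\frac12\trace M X B X^T$ when $X \in \MM_m$. Concretely, since the map $d \mapsto \trace \Diag(d) X B X^T$ is linear in $d$, it suffices to show $\trace \Diag(d) X B X^T = 0$ for every $d \in \Rn$ and every partition matrix $X$; then \eqref{eq:objfn} follows by subtracting this from the known identity $|\delta(S)| = \frac12 \trace A X B X^T$, and the final sentence is just the two substitutions $d = 0$ and $d = Ae$ together with $L = \Diag(Ae) - A$.

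The key computation is to rewrite $\trace \Diag(d) X B X^T = \sum_{\ell=1}^n d_\ell \, (X B X^T)_{\ell\ell}$ and show each diagonal entry $(X B X^T)_{\ell\ell}$ vanishes. First I would record the structure of $B$: writing $X = [\,X_{:1} \ \cdots \ X_{:k}\,]$ in columns, one has $X B X^T = \sum_{i \neq j,\; i,j < k} X_{:i} X_{:j}^T$ (the $ee^T$ block couples all distinct pairs among the first $k-1$ sets, the $-I_{k-1}$ removes the diagonal pairs, and the last row/column of $B$ being zero drops $X_{:k}$ entirely). Hence $(X B X^T)_{\ell\ell} = \sum_{i \neq j,\; i,j<k} X_{\ell i} X_{\ell j}$. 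Now invoke the gangster/partition property: for $X \in \MM_m = \EE \cap \ZZ$ (Proposition~\ref{prop:partmatrices}), row $\ell$ has exactly one nonzero entry (from $Xe = e$ and $X_{\ell j} \in \{0,1\}$), so $X_{\ell i} X_{\ell j} = 0$ whenever $i \neq j$. Therefore every term in the sum is zero, $(X B X^T)_{\ell\ell} = 0$ for all $\ell$, and consequently $\trace \Diag(d) X B X^T = 0$ regardless of $d$.

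I do not expect a genuine obstacle here; the only point requiring a little care is being explicit about which representation of $\MM_m$ is used — the argument only needs that each row of a partition matrix is a $0/1$ vector summing to $1$, which is immediate from $\MM_m = \EE \cap \ZZ$. An equally clean alternative is to observe $\diag(X B X^T) = (X \circ (XB))e$ and that $X \circ (XB) = 0$ because the nonzero pattern of $XB$ in row $\ell$ avoids the unique nonzero column of $X$ in that row; this is essentially the gangster constraint $X_{:i} \circ X_{:j} = 0$ for $i \neq j$ appearing in the last line of \eqref{eq:partmatrices}. Either way the proof is three lines once the block expansion of $X B X^T$ is written down, and the ``In particular'' clause needs no separate argument.
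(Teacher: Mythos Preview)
Your proof is correct and follows essentially the same route as the paper: both reduce to the known case $d=0$ via the key fact $\diag(XBX^T)=0$, whence $\trace \Diag(d)XBX^T=0$ for all $d$. The only difference is that the paper simply cites this diagonal-vanishing fact from the proof of \cite[Prop.~2]{ReLiPi:13}, whereas you supply an explicit self-contained argument for it (expanding $XBX^T=\sum_{i\neq j,\;i,j<k}X_{:i}X_{:j}^T$ and using that each row of a partition matrix has a single nonzero entry); your version is therefore slightly more detailed but not genuinely different.
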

\begin{proof}
The result for the choices of $d=0, Ae$,
equivalently $A, -L$, respectively, was proved in
\cite[Prop. 2]{ReLiPi:13}. Moreover, as noted in the proof of
\cite[Prop. 2]{ReLiPi:13}, ${\diag}(XBX^T) = 0$.
Consequently,
\[
\frac 12 \trace AXBX^T = \frac 12 \trace
\left(A-{\Diag}(d)\right)\!XBX^T, \quad \forall d\in \mathbb{R}^n.
\]
\end{proof}

In this paper we focus on the following problem
given by \eqref{eq:cutS} and \eqref{eq:objfn}:
\index{cut minimization problem}
\index{objective function}
\begin{equation}
\label{eq:Prob}
  \begin{array}{rrcl}
    \cut(m)= &\min & \frac 12  \trace (A-\Diag(d))XBX^T \\
    &\text{s.t.} & X\in \Mcal_m;
  \end{array}
\end{equation}
here $d\in \Rn$.
We recall that if $\cut(m)=0$, then
we have obtained a vertex separator, i.e.,~removing the $k$-th set
results in a graph where the first $k-1$ sets are disconnected.
On the other hand, if we find a positive lower bound $\cut(m) \geq \alpha >
0$, then no vertex separator can exist for this $m$. This observation can be employed
in solving some classical vertex separator problems, which look for an
``optimal" vertex separator in the case $k=3$ under constraints on
$(m_1,m_2,m_3)$. Specifically, since there are  at most $\binom{n-1}{2}$
$3$-tuple integers summing up to $n$, one only needs to consider at most
$\binom{n-1}{2}$ different MC problems in order to find the
\emph{optimal} vertex separator.

Though any choice of $d\in \Rn$ is equivalent for \eqref{eq:Prob} on the
feasible set $\Mcal_m$, as we shall see repeatedly throughout the paper, this does \emph{not} mean that they are
equivalent on the relaxations that we look at below.
We would also like to mention that similar observations concerning
diagonal perturbation were previously made for the QAP, the GP and their relaxations, see e.g.,~\cite{ReWo:89,FaReWo:92}.
Finally, note that the feasible set of \eqref{eq:Prob} is the same as
that of the GP, see e.g.,~\cite{ReWo:90,WoZh:96} for the projected eigenvalue
bound and the SDP bound, respectively.  
Thus, the techniques for deriving bounds for MC can
be adapted to obtain new results concerning lower bounds for GP.

\section{Eigenvalue Based Lower Bounds}
\label{sect:eigs}
We now present bounds on $\cut(m)$ based on $X\in \DD_O$, the
$m$-diagonal orthogonality type constraint $X^TX=\Diag(m)$.
For notational simplicity, from now on, we define $M:=\Diag(m)$,
$\tilde m:=
\index{$M=\Diag(m)$}
\index{$\tilde M=\Diag(\tilde m)$}
\index{$\tilde m$}
\left(\sqrt{m_1},\ldots, \sqrt{m_k}\right)^T$ and $\tilde M:= \Diag (\tilde
m)$.
\index{$\tilde B= M^{1/2}BM^{1/2}$}
For a real symmetric matrix $C\in \Ss^t$, we let
\[
\lambda_1(C)\ge \lambda_{2}(C)\ge \cdots \ge \lambda_t(C)
\]
denote the eigenvalues of $C$
in nonincreasing order, and set $\lambda(C)=\left(\lambda_i(C)\right)\in \mathbb{R}^t$.

\subsection{Basic Eigenvalue Lower Bound}

The Hoffman-Wielandt bound \cite{hw53} can be applied to get a simple
eigenvalue bound. In this approach, we solve the relaxed problem
\begin{equation}
\label{eq:eigbndpgm}
\begin{array}{rcc}
\cut(m) \geq &\min & \frac 12  \trace GXBX^T \\
&\text{s.t.}& X\in \DD_O,
\end{array}
\end{equation}
where $G = G(d) =A-\Diag(d)$, $d\in \Rn$. We first introduce the following definition.
\index{$G = A -\Diag(d), d\in \Rn$}
\begin{definition}
\index{$\left\langle x,y \right\rangle_-$, minimal scalar product}
\index{minimal scalar product, $\left\langle x,y \right\rangle_-$}
  For two vectors $x$, $y\in \mathbb{R}^n$, the minimal scalar product is defined by
  \[
  \left\langle x,y\right\rangle_- := \min \left\{ \sum_{i=1}^n x_{\phi(i)}y_i:\; \mbox{$\phi$ is a permutation on $N$} \right\}.
  \]
\end{definition}
We will also need the following two auxiliary results.
\begin{theorem}[{Hoffman and Wielandt \cite{hw53}}]
\label{thm:HWineq}
Let $C$ and $D$ be symmetric matrices of orders $n$ and $k$, respectively, with
$k\leq n$. Then
\begin{equation}\label{eq:1stminprob}
\min \left\{ \trace CXDX^T : X^TX=I_k \right\}
=\left\langle \lambda(C),\begin{pmatrix}\lambda(D) \cr 0
\end{pmatrix}\right\rangle_-.
\end{equation}
The minimum on the left is attained for $X=
\begin{bmatrix}  p_{\phi(1)} & \ldots &
p_{\phi(k)}\end{bmatrix} Q^T$, where $p_{\phi(i)}$ is a normalized eigenvector
to $\lambda_{\phi(i)}(C)$, the columns of
$Q=\begin{bmatrix} q_1 & \ldots & q_k
\end{bmatrix}$ consist of the normalized eigenvectors $q_i$ of
$\lambda_i(D)$, and $\phi$ is the permutation of $\{1,\ldots,n\}$
attaining the minimum in the minimal scalar product.
\qed
\end{theorem}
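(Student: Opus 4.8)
The plan is to reduce to the case where $C$ and $D$ are both diagonal by a simultaneous orthogonal change of variables, and then to recognize the resulting problem as the minimization of a linear functional over a transportation-type polytope whose vertices are partial permutation matrices.

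First I would diagonalize: write $C=P\Lambda_C P^T$ and $D=Q\Lambda_D Q^T$ with $P\in\R^{n\times n}$, $Q\in\R^{k\times k}$ orthogonal and $\Lambda_C=\Diag(\lambda(C))$, $\Lambda_D=\Diag(\lambda(D))$. Substituting and using the cyclic property of the trace, $\trace CXDX^T=\trace \Lambda_C\,Y\,\Lambda_D\,Y^T$ with $Y:=P^TXQ$. Since $Q$ is a square orthogonal matrix, $X\mapsto Y$ is a bijection of $\{X:X^TX=I_k\}$ onto itself ($Y^TY=Q^TX^TXQ=I_k$, and conversely $X=PYQ^T$), so we may assume from now on that $C=\Diag(\gamma)$ with $\gamma_1\ge\cdots\ge\gamma_n$ and $D=\Diag(\delta)$ with $\delta_1\ge\cdots\ge\delta_k$.

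In the diagonal case a one-line computation gives $\trace CXDX^T=\sum_{i=1}^{n}\sum_{j=1}^{k}\gamma_i\delta_j X_{ij}^2$. Put $Z:=X\circ X$ (entrywise square). The constraint $X^TX=I_k$ forces $Z\ge 0$, forces every column sum of $Z$ to equal $1$ (these are the diagonal entries of $X^TX$), and forces every row sum of $Z$ to be at most $1$ (these are the diagonal entries of the rank-$k$ orthogonal projector $XX^T$, which lie in $[0,1]$). Hence the objective is bounded below by the minimum of the linear function $f(Z):=\sum_{ij}\gamma_i\delta_j Z_{ij}$ over the polytope $\PP:=\{Z\in\R^{n\times k}: Z\ge 0,\ Z^Te=e,\ Ze\le e\}$. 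Conversely, the vertices of $\PP$ are exactly the $0$--$1$ partial permutation matrices (one $1$ per column, at most one $1$ per row), and any such vertex $Z$ is itself of the form $X\circ X$ with $X:=Z$ satisfying $X^TX=I_k$; so the minimum over $\{X:X^TX=I_k\}$ and the minimum of $f$ over $\PP$ coincide, and both equal $\min\{f(Z):Z\text{ a vertex of }\PP\}$.

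It then remains to evaluate this combinatorial minimum and read off the minimizer. A vertex of $\PP$ corresponds to an injection $\sigma:\{1,\dots,k\}\hookrightarrow\{1,\dots,n\}$ via $Z_{\sigma(j),j}=1$, with value $\sum_{j=1}^{k}\gamma_{\sigma(j)}\delta_j$; extending $\sigma$ to any permutation $\phi$ of $\{1,\dots,n\}$ leaves this value unchanged, since the added indices meet the zero block of $\begin{pmatrix}\lambda(D)\cr 0\end{pmatrix}$, so the minimum over vertices is exactly $\left\langle\lambda(C),\begin{pmatrix}\lambda(D)\cr 0\end{pmatrix}\right\rangle_-$. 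Unwinding $X=PYQ^T$ with $Y$ the optimal $0$--$1$ matrix, the columns of $PY$ are $p_{\phi(1)},\dots,p_{\phi(k)}$, giving $X=\begin{bmatrix}p_{\phi(1)}&\cdots&p_{\phi(k)}\end{bmatrix}Q^T$ as stated, where $\phi$ attains the minimal scalar product. The one genuinely non-routine ingredient is the vertex characterization of $\PP$; I would either cite the standard integrality theorem for the capacitated transportation/assignment polytope, or give the short direct argument that a $Z\in\PP$ with a fractional entry supports an alternating cycle along which $Z$ can be perturbed in both directions without leaving $\PP$, contradicting extremality --- i.e., the Birkhoff--von Neumann argument already invoked for Proposition~\ref{prop:partmatrices}.
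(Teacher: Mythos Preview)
The paper does not give its own proof of this theorem: the statement is imported from \cite{hw53} and closed with a bare \qed, so there is no in-paper argument to compare against. Your proposal therefore supplies a proof where the paper chose to cite one.

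On its merits, your argument is correct and clean. The reduction to diagonal $C$ and $D$ via $Y=P^TXQ$ is standard and correctly uses that $Q$ is square orthogonal, so the Stiefel constraint is preserved. The key step --- passing from $X$ to $Z=X\circ X$ and observing that the image lands in the polytope $\{Z\ge 0,\ Z^Te=e,\ Ze\le e\}$ --- is valid: the column-sum condition is exactly $\diag(X^TX)=e$, and the row-sum bound follows because $XX^T$ is an orthogonal projector, hence has diagonal entries in $[0,1]$. The matching of the two minima via the vertex set (partial permutation matrices, each of which is already of the form $X\circ X$ for a feasible $X$) closes the loop without gaps. The identification of the vertex minimum with the minimal scalar product, and the recovery of the minimizer $X=[\,p_{\phi(1)}\ \cdots\ p_{\phi(k)}\,]Q^T$, are both correct.

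Two small remarks. First, the integrality of the polytope you call $\PP$ is indeed routine (add slacks to get a transportation polytope with integral data), but since the paper already invokes the Birkhoff--von~Neumann argument in Proposition~\ref{prop:partmatrices}, your proposed alternating-cycle justification is in the same spirit and appropriate here. Second, the symbol $\PP$ is used elsewhere in the paper for $\mathcal{P}$; if this proof were to be inserted, a different letter for the polytope would avoid a clash.
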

\begin{lem}[{\cite[Lemma~4]{ReLiPi:13}}]
\label{lem:eigBtilde}
The $k$-ordered eigenvalues of the matrix $\tilde B:= \tilde MB\tilde M$
satisfy
\index{$\tilde B= M^{1/2}BM^{1/2}$}
\[
\lambda_1(\tilde B)>0=
 \lambda_{2}(\tilde B)  > \lambda_{3} (\tilde B)   \geq \ldots \geq
\lambda_{k-1}(\tilde B) \geq \lambda_k(\tilde B).
\qed
\]
\end{lem}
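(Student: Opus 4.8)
The plan is to analyze the matrix $\tilde B = \tilde M B \tilde M$ directly from the block structure of $B$. Recall $B = \begin{bmatrix} ee^T - I_{k-1} & 0 \\ 0 & 0 \end{bmatrix}$, so conjugating by the diagonal matrix $\tilde M = \Diag(\sqrt{m_1},\ldots,\sqrt{m_k})$ gives
\[
\tilde B = \begin{bmatrix} \tilde m_{1:k-1}\tilde m_{1:k-1}^T - \Diag(m_1,\ldots,m_{k-1}) & 0 \\ 0 & 0 \end{bmatrix},
\]
where $\tilde m_{1:k-1} = (\sqrt{m_1},\ldots,\sqrt{m_{k-1}})^T$. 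Thus $\tilde B$ has at least one zero eigenvalue coming from the trailing $1\times 1$ zero block, and the remaining spectrum is that of the $(k-1)\times(k-1)$ matrix $C := vv^T - D$, where $v := \tilde m_{1:k-1}$ and $D := \Diag(m_1,\ldots,m_{k-1})$. So the task reduces to locating the eigenvalues of $C$, a rank-one perturbation of a negative definite diagonal matrix.

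First I would handle $\lambda_1(\tilde B) > 0$: since $\trace \tilde B = \trace C = \sum_{i=1}^{k-1} m_i - \sum_{i=1}^{k-1} m_i = 0$ would be the wrong computation — actually $\trace(vv^T) = \|v\|^2 = \sum_{i=1}^{k-1} m_i$ while $\trace D = \sum_{i=1}^{k-1} m_i$, so $\trace C = 0$; but $C \neq 0$ since $k > 2$, hence $C$ has both a strictly positive and a strictly negative eigenvalue, giving $\lambda_1(\tilde B) > 0$ and $\lambda_k(\tilde B) < 0 \le \lambda_{k-1}(\tilde B)$ is not immediate. To pin down the claimed pattern — exactly one positive eigenvalue, exactly one zero eigenvalue (at position $2$), and $k-2$ strictly negative ones at positions $3,\ldots,k$ — I would use the eigenvalue interlacing / secular equation for rank-one updates. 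Write the characteristic polynomial of $C = -D + vv^T$; for $\lambda$ not equal to any $-m_i$, $\det(\lambda I - C) = \det(\lambda I + D)\bigl(1 - v^T(\lambda I + D)^{-1} v\bigr) = \prod_i (\lambda + m_i) \bigl(1 - \sum_i \tfrac{m_i}{\lambda + m_i}\bigr)$. Since $\sum_i \tfrac{m_i}{\lambda+m_i} = 1$ at $\lambda = 0$ (as $\sum m_i$ cancels — check: at $\lambda=0$ this is $\sum_i 1 = k-1 \ne 1$ in general), the zero eigenvalue actually needs separate justification: note $B$ itself has $ee^T - I_{k-1}$ in the top block, and $ee^T - I_{k-1}$ has eigenvalue $-1$ with multiplicity $k-2$ and eigenvalue $k-2$ once, so $B$ is not singular on that block when $k > 2$; the single zero eigenvalue of $\tilde B$ comes only from the trailing block, which is why it sits at position $2$ once $\lambda_1(\tilde B) > 0$ is established.

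The cleanest route, and the one I would actually write, is: (i) show $\lambda_1(\tilde B) > 0$ by exhibiting a vector with positive quadratic form, e.g. $\tilde m_{1:k-1}$ itself into $C$, giving $v^T C v = \|v\|^4 - \sum m_i^2 = (\sum m_i)^2 - \sum m_i^2 > 0$ since $k-1 \ge 2$; (ii) show $C$ is the sum of a rank-one positive semidefinite matrix and the negative definite $-D$, so by Weyl's inequality $\lambda_2(C) \le \lambda_2(vv^T) + \lambda_1(-D) = 0 + (-\min_i m_i) < 0$, hence $C$ has at most one nonnegative eigenvalue, and combined with (i) exactly one positive and $k-2$ strictly negative eigenvalues, none zero; (iii) conclude $\tilde B = C \oplus 0$ has spectrum: one positive value $\lambda_1(\tilde B)$, then $0$ (the trailing block), then the $k-2$ negative values of $C$ in positions $3,\ldots,k$, which is exactly the asserted chain. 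The main obstacle is purely bookkeeping: making sure the single zero eigenvalue lands at index $2$ rather than being absorbed among the negatives, which is precisely what step (ii)'s strict inequality $\lambda_2(C) < 0$ guarantees, together with $\lambda_1(\tilde B) > 0$ from step (i). Everything else is a short Weyl-inequality argument and I would not belabor it. Alternatively one can simply cite \cite[Lemma~4]{ReLiPi:13} verbatim as the statement already does, but the self-contained argument above via the block decomposition and Weyl is short enough to include.
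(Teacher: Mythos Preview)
Your proposal is correct. The paper itself gives no proof at all: the lemma is simply stated with a citation to \cite[Lemma~4]{ReLiPi:13} and a \qed, exactly the alternative you mention at the end. Your self-contained argument via the block decomposition $\tilde B = C \oplus 0$ with $C = vv^T - D$, the quadratic-form computation $v^TCv = (\sum_i m_i)^2 - \sum_i m_i^2 > 0$ (using $k>2$, which is a standing hypothesis in the paper), and Weyl's inequality $\lambda_2(C) \le \lambda_2(vv^T) + \lambda_1(-D) = -\min_i m_i < 0$ is clean and complete. The middle portion of your write-up (the trace-equals-zero digression and the aborted secular-equation attempt) is exploratory scratch work and should be cut; the route you label (i)--(iii) is all that is needed.
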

We now present the basic eigenvalue lower bound, which turns out to always be negative.
\begin{theorem}
Let $d\in \Rn$, $G=A-\Diag(d)$. 
Then
\[
\cut(m) \geq 0 >
p^*_{eig}(G):=
  \frac12\left\langle \lambda(G), \begin{pmatrix}
    \lambda({\tilde B})\\ 0
  \end{pmatrix}\right\rangle_-
= \frac12\left(\sum_{i=1}^{k-2} \lambda_{k-i+1}(\tilde B)\lambda_i(G)+
\lambda_{1}(\tilde B)\lambda_{n}(G)\right).
\]
Moreover, the function $p^*_{eig}(G(d))$ is concave as a function of $d\in \Rn$.
\end{theorem}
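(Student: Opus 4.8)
The plan is to establish the two assertions separately. For the chain $\cut(m)\ge 0 > p^*_{eig}(G)$, the inequality $\cut(m)\ge 0$ is immediate since $\cut(m)=|\delta(S)|$ is a cardinality; it is also the trivial lower bound from the relaxation \eqref{eq:eigbndpgm} because $\DD_O$ contains $\Mcal_m$. To see that the relaxation value equals the minimal scalar product expression, first reduce $X^TX=\Diag(m)=M$ to the orthonormal case by the substitution $X=YM^{1/2}$ (equivalently $Y=XM^{-1/2}$), which is a bijection between $\{X:X^TX=M\}$ and $\{Y:Y^TY=I_k\}$; under it $\frac12\trace GXBX^T=\frac12\trace G Y(M^{1/2}BM^{1/2})Y^T=\frac12\trace GY\tilde B Y^T$. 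Then apply Theorem~\ref{thm:HWineq} with $C=G$ (order $n$), $D=\tilde B$ (order $k$), which gives exactly $\frac12\langle\lambda(G),(\lambda(\tilde B)^T,0)^T\rangle_-$.

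The explicit three-term formula then follows by evaluating the minimal scalar product using the sign pattern of $\lambda(\tilde B)$ from Lemma~\ref{lem:eigBtilde}: there are $k-2$ strictly negative eigenvalues $\lambda_3(\tilde B)\ge\cdots\ge\lambda_k(\tilde B)$, one strictly positive $\lambda_1(\tilde B)$, one zero $\lambda_2(\tilde B)$, and $n-k$ padded zeros. To minimize $\sum x_{\phi(i)}y_i$ where $y$ is the padded $\lambda(\tilde B)$ vector, one pairs the positive entry $\lambda_1(\tilde B)$ with the smallest $\lambda_n(G)$, pairs the $k-2$ negative entries (in increasing order of magnitude is irrelevant for the value, but) with the $k-2$ largest eigenvalues $\lambda_1(G),\dots,\lambda_{k-2}(G)$ so that each product $\lambda_{k-i+1}(\tilde B)\lambda_i(G)$ is as negative as possible, and pairs all zeros arbitrarily; this rearrangement is the standard one behind the Hoffman--Wielandt identity and yields $\sum_{i=1}^{k-2}\lambda_{k-i+1}(\tilde B)\lambda_i(G)+\lambda_1(\tilde B)\lambda_n(G)$. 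Strict negativity of $p^*_{eig}(G)$ requires observing that $\lambda_1(\tilde B)>0$ while $\lambda_n(G)<0$: indeed $G=A-\Diag(d)$ has zero trace contribution irrelevantly, but more simply $\trace\tilde B=\trace\tilde M B\tilde M = \sum m_i B_{ii}$; one checks $B$ has zero diagonal so this is $0$, forcing $\lambda_n(G)<0$ unless $G=cI$—and a brief argument rules that out (or: $\lambda_n(G)\le \lambda_1(\tilde B)\lambda_n(G)/\lambda_1(\tilde B)$, and since the off-diagonal of $A$ is nonzero for any graph with an edge, $G$ is not a nonnegative multiple of $I$). Combined with the negative terms, the sum is strictly negative.

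For concavity, the key observation is that for each fixed permutation $\phi$ and each fixed feasible $Y$ (equivalently $X$), the map $d\mapsto \frac12\trace G(d)\,Y\tilde B Y^T = \frac12\trace (A-\Diag(d))Y\tilde B Y^T$ is affine in $d$, since $\Diag(d)$ depends linearly on $d$. Then $p^*_{eig}(G(d))$ is, by the minimization characterization \eqref{eq:eigbndpgm} (or directly by the minimal scalar product being a minimum over permutations of affine functions of $\lambda(G(d))$—but the cleaner route is the minimization over $X\in\DD_O$), a pointwise infimum of a family of affine functions of $d$, hence concave. I would phrase this via \eqref{eq:eigbndpgm}: $p^*_{eig}(G(d))=\min_{X\in\DD_O}\frac12\trace(A-\Diag(d))XBX^T$ is an infimum over $X$ of functions each affine in $d$, and an infimum of affine functions is concave.

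The main obstacle is the bookkeeping in the second step—correctly matching the sign pattern of $\lambda(\tilde B)$ (one positive, one zero, $k-2$ negative, plus $n-k$ zeros from padding) against the ordered eigenvalues of $G$ so that the minimal scalar product collapses to precisely the stated three groups of terms, and separately nailing down the strict inequality $\lambda_n(G)<0$ from $\trace(\text{something})=0$ together with $G$ not being scalar. The concavity part is essentially immediate once the infimum-of-affine-functions viewpoint is adopted.
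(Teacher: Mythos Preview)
Your reduction via $X=Y M^{1/2}$ and the application of Theorem~\ref{thm:HWineq}, the derivation of the explicit formula from the sign pattern in Lemma~\ref{lem:eigBtilde}, and the concavity argument (infimum of affine functions of $d$) are all correct and match the paper's proof essentially verbatim.

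The genuine gap is in your argument for the strict inequality $p^*_{eig}(G)<0$. You try to argue that $\lambda_n(G)<0$, but this is simply false for arbitrary $d\in\Rn$: take $d$ with sufficiently negative entries and $G=A-\Diag(d)$ becomes positive definite. The sentence ``$\trace\tilde B=0$, forcing $\lambda_n(G)<0$ unless $G=cI$'' is a non sequitur, and the parenthetical alternative is a tautology. Likewise, ``combined with the negative terms'' presumes $\lambda_i(G)>0$ for $i\le k-2$, which also need not hold. So the whole paragraph on strict negativity does not go through.

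The paper handles this differently: it uses $\trace\tilde B=0$ to show that the \emph{average} over all permutations $\psi$ of $\sum_{i=1}^k\lambda_{\psi(i)}(G)\lambda_i(\tilde B)$ is zero, hence the minimum is $\le 0$; strictness then follows because $\tilde B$ and $G$ each have at least two distinct eigenvalues (the latter since $E\neq\emptyset$ forces $G\neq\beta I$), so not all permutations give the same value. An even shorter route, closer in spirit to what you were attempting, is to use $\lambda_1(\tilde B)=-\sum_{i=1}^{k-2}\lambda_{k-i+1}(\tilde B)$ (from $\trace\tilde B=0$) to rewrite the explicit formula as
\[
2\,p^*_{eig}(G)=\sum_{i=1}^{k-2}\lambda_{k-i+1}(\tilde B)\bigl(\lambda_i(G)-\lambda_n(G)\bigr),
\]
which is a sum of products of a strictly negative factor and a nonnegative factor, with the $i=1$ term strictly negative since $\lambda_1(G)>\lambda_n(G)$.
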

\begin{proof}
We use the substitution $X=Z\tilde M$, i.e.,~$Z=X\tilde M^{-1}$, in \eqref{eq:eigbndpgm}.
Then the constraint on $X$ implies that $Z^TZ=I$.
We now solve the equivalent problem to \eqref{eq:eigbndpgm}:
\beq
\label{eq:concZ}
\begin{array}{ccc}
\min & \frac 12  \trace GZ(\tilde MB\tilde M)Z^T \\
\text{s.t.}& Z^TZ=I.
\end{array}
\eeq
The optimal value is obtained using the minimal scalar product of
eigenvalues as done in the Hoffman-Wielandt result, Theorem
\ref{thm:HWineq}. From this we conclude immediately that $\cut(m) \geq p^*_{eig}(G)$.
Furthermore, the explicit formula for the minimal scalar product follows immediately from Lemma~\ref{lem:eigBtilde}.

We now show that $p^*_{eig}(G) < 0$.
Note that $\tr \tilde{M}B\tilde{M}= \tr MB =0$.
Thus the sum of the eigenvalues of
$\tilde{B}=\tilde{M}B\tilde{M}$ is 0. Let $\widehat\phi$ be a permutation of $\{1,\ldots,n\}$
that attains the minimum value
$\min\limits_{ \phi \text{ permutation}} \sum_{i=1}^k \lambda_{\phi(i)}(G)\lambda_i(\tilde{B})$. Then for any permutation
$\psi$, we have
\begin{equation}
\label{eq:perms}
\sum_{i=1}^k \lambda_{\psi(i)}(G)\lambda_i(\tilde{B})
\geq \sum_{i=1}^k \lambda_{\widehat{\phi}(i)}(G)\lambda_i(\tilde{B}).
\end{equation}
Now if $\TT$ is the set of all permutations of $\{1,2,\ldots,n\}$, then
we have
\begin{equation}\label{eq:sum}
\sum_{ \psi \in \TT} \left(\sum_{i=1}^k \lambda_{\psi(i)}(G)\lambda_i(\tilde{B})\right) =
\sum_{i=1}^k \left(\sum_{ \psi \in \TT}\lambda_{\psi(i)}(G)\right)\lambda_i(\tilde{B}) =
\left(\sum_{ \psi \in \TT}\lambda_{\psi(1)}(G)\right)\left(\sum_{i=1}^k \lambda_i(\tilde{B})\right) = 0,
\end{equation}
since $\sum_{ \psi \in \TT}\lambda_{\psi(i)}(G)$ is independent of $i$.
This means that there exists at least one permutation $\psi$ so that $\sum_{i=1}^k \lambda_{\psi(i)}(G)\lambda_i(\tilde{B})\le 0$,
which implies that the minimal scalar product must satisfy
$\sum_{i=1}^k
\lambda_{\widehat{\phi}(i)}(G)\lambda_i(\tilde{B}) \leq 0$. Moreover,
in view of \eqref{eq:perms} and \eqref{eq:sum},
this minimal scalar product is
zero if, and only if,
$\sum_{i=1}^k \lambda_{\psi(i)}(G)\lambda_i(\tilde{B}) =0$, for all $\psi
\in \TT$.
Recall from Lemma~\ref{lem:eigBtilde} that $\lambda_1(\tilde{B}) > \lambda_k(\tilde{B})$.
Moreover, if
all eigenvalues of $G$ were equal, then necessarily $G=\beta I$ for some $\beta\in \mathbb{R}$ and $A$
must be diagonal. This implies that $A = 0$, a contradiction. This contradiction shows that $G(d)$
must have at least two distinct eigenvalues, regardless of the choice of $d$.
Therefore, we can change the order and
change the value of the scalar product on the left in \eqref{eq:perms}.
Thus $p^*_{eig}(G)$ is strictly negative.

Finally, the concavity follows by observing from
\eqref{eq:concZ} that
\[
p^*_{eig}(G(d))=
\min_{Z^TZ=I}  \frac 12  \trace G(d)Z(\tilde MB\tilde M)Z^T,
\]
is a function obtained as a minimum of a set of functions affine in $d$,
and recalling that the minimum of affine functions is concave.
\end{proof}

\begin{remark}
We emphasize here that the eigenvalue bounds depend on the choice of
$d\in \Rn$. Though the $d$ is irrelevant
in Proposition \ref{prop:partmatrix}, i.e.,~the function is equivalent on the
feasible set of partition matrices $\Mcal_m$, the values
are no longer equal on the relaxed set $\DD_O$.
Of course the values are negative and not useful as a
bound. We can fix $d = Ae\in \Rn$ and consider the bounds
\[
\cut(m) \geq 0 >
p^*_{eig}(A-\gamma \Diag(d)) =
  \frac12\left\langle \lambda(A-\gamma \Diag(d)), \begin{pmatrix}
    \lambda({\tilde B})\\ 0
  \end{pmatrix}\right\rangle_-, \quad \gamma \geq 0.
\]
From our empirical tests on random problems, we
observed that the maximum occurs for $\gamma$ closer to $0$ than $1$, thus
illustrating why the bound using $G=A$ is better than the one using $G=-L$.
This motivates our use of $G=A$ in the simulations below for the
improved bounds.
\end{remark}

\subsection{Projected Eigenvalue Lower Bounds}
Projected eigenvalue bounds for the
QAP, and for GP are presented and studied in
\cite{FaReWo:92,HaReWo:89,ReWo:90}. They have proven to be surprisingly
stronger than the basic eigenvalue bounds. (Seen to be $<0$ above.)
These are based on a
special parametrization of the affine span of the
linear equality constraints, $\EE$.
Rather than solving for the basic eigenvalue bound using the program in
\eqref{eq:eigbndpgm}, we include the linear equality
constraints $\EE$, i.e.,~we consider the problem
\begin{equation}
\label{eq:projbndX}
\begin{array}{ccl}
\min & \frac 12  \trace GXBX^T \\
\text{s.t.}& X\in \DD_O \cap \EE,
\end{array}
\end{equation}
where $G=A-\Diag(d)$, $d\in \Rn$.

We define the $n\times n$ and $k\times k$ orthogonal matrices $P,Q$ with
\index{orthogonal matrices, $\OO_n$}
\index{$\OO_n$, orthogonal matrices}
\begin{equation}
\label{eq:defPQ}
P=\begin{bmatrix} \frac 1{\sqrt{n}} e & V \end{bmatrix} \in \OO_n,
\quad
Q=\begin{bmatrix} \frac 1{\sqrt{n}} \tilde m  & W \end{bmatrix} \in \OO_k.
\end{equation}
\begin{lemma}{\cite[Lemma 3.1]{ReWo:90}}
\label{lem:paramE}
Let $P,Q,V,W$ be defined in \eqref{eq:defPQ}. Suppose that $X \in
\R^{n\times  k}$ and $Z\in \R^{(n-1)\times  (k-1)}$ are related by
\begin{equation}
\label{eq:XZrelate}
X=P \begin{bmatrix}  1 & 0 \cr 0 & Z \end{bmatrix}  Q^T \tilde M.
\end{equation}
Then the following holds:
\begin{enumerate}
\item
$X\in \EE$.
\item $ X \in \NN \Leftrightarrow  VZW^T \geq -\frac 1n e\tilde m^T $.
\item $ X\in \DD_O \Leftrightarrow   Z^TZ = I_{k-1} $.
\end{enumerate}
Conversely, if $X\in \EE$, then there exists $Z$ such that the
representation \eqref{eq:XZrelate} holds.
\qed
\end{lemma}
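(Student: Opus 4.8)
The plan is to establish the three numbered equivalences by direct block-matrix computation, using only the orthogonality relations packed into $P$ and $Q$ in \eqref{eq:defPQ}, and then to obtain the converse by exhibiting $Z$ explicitly. The facts I will invoke repeatedly are $V^Te=0$, $V^TV=I_{n-1}$, $W^T\tilde m=0$, $W^TW=I_{k-1}$, $\tilde m^T\tilde m=m^Te=n$, and, since $\tilde M=\Diag(\tilde m)$ has strictly positive diagonal, the elementary identities $\tilde Me=\tilde m$, $\tilde M^{-1}\tilde m=e$, $\tilde M\tilde m=m$, together with $M=\tilde M^2$.

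First I would prove item 1: compute $Q^T\tilde Me=Q^T\tilde m=(\sqrt n,0,\dots,0)^T$, which the block-diagonal factor leaves fixed, so $Xe=P(\sqrt n,0,\dots,0)^T=e$; symmetrically $P^Te=(\sqrt n,0,\dots,0)^T$ gives $X^Te=\tilde MQ(\sqrt n,0,\dots,0)^T=\tilde M\tilde m=m$, hence $X\in\EE$. For item 3, since $P^TP=I_n$,
\[
X^TX=\tilde M\,Q\begin{bmatrix}1&0\\0&Z^TZ\end{bmatrix}Q^T\,\tilde M ,
\]
and as $\tilde M$ is nonsingular with $M=\tilde M^2$, the equation $X^TX=M$ is equivalent to $Q\begin{bmatrix}1&0\\0&Z^TZ\end{bmatrix}Q^T=I_k$, hence (multiply on the left by $Q^T$ and on the right by $Q$) to $\begin{bmatrix}1&0\\0&Z^TZ\end{bmatrix}=I_k$, i.e.\ to $Z^TZ=I_{k-1}$.

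For item 2, expand $P\begin{bmatrix}1&0\\0&Z\end{bmatrix}Q^T=\tfrac1n e\tilde m^T+VZW^T$, so that $X=\big(\tfrac1n e\tilde m^T+VZW^T\big)\tilde M$; since right multiplication by the positive diagonal matrix $\tilde M$ only rescales the columns by positive numbers, $X\ge 0$ entrywise iff $\tfrac1n e\tilde m^T+VZW^T\ge 0$, i.e.\ iff $VZW^T\ge-\tfrac1n e\tilde m^T$. For the converse, given $X\in\EE$ set $Z:=V^TX\tilde M^{-1}W$ and $Y:=P^TX\tilde M^{-1}Q$; then $Y_{11}=\tfrac1n e^TXe=\tfrac1n e^Te=1$ (using $Xe=e$), the remaining entries of the first row of $Y$ equal $\tfrac1{\sqrt n}m^T\tilde M^{-1}W=\tfrac1{\sqrt n}\tilde m^TW=0$ (using $X^Te=m$ and $W^T\tilde m=0$), and the remaining entries of the first column equal $\tfrac1{\sqrt n}V^TXe=\tfrac1{\sqrt n}V^Te=0$ (using $Xe=e$ and $V^Te=0$). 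Thus $Y=\begin{bmatrix}1&0\\0&Z\end{bmatrix}$, and $PYQ^T\tilde M=PP^TX\tilde M^{-1}QQ^T\tilde M=X$, which is exactly \eqref{eq:XZrelate}.

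I do not expect a genuine obstacle: the whole argument is bookkeeping with $2\times 2$ block matrices and the orthogonality relations. The only points that need a little care are keeping straight which relation is used where ($V^Te=0$ and $W^T\tilde m=0$ versus the unit-norm normalizations of the first columns of $P$ and $Q$), the remark in item 2 that dividing the entrywise nonnegativity constraint through by $\tilde M$ on the right changes nothing because $\tilde m$ is strictly positive, and, in the converse, checking that the first row and first column of the candidate $Y$ really are $(1,0,\dots,0)$ before reading off $Z$ as its trailing block.
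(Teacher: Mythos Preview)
Your proof is correct in every detail. The paper itself does not give a proof of this lemma: it is stated with a citation to \cite[Lemma~3.1]{ReWo:90} and closed with a \qed, so there is nothing to compare against within this paper. What you have written is exactly the standard direct verification one would expect, and the expansion $P\begin{bmatrix}1&0\\0&Z\end{bmatrix}Q^T=\tfrac1n e\tilde m^T+VZW^T$ that you derive for item~2 is in fact used verbatim by the authors a few lines later in equation~\eqref{eq:XZrelate2}.
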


Let $\QQ:  \R^{(n-1)\times(k-1)} \rightarrow
\R^{n\times k}$ be the linear transformation defined by
$\QQ(Z)=VZW^T\tilde M$ and define $\widehat X=\frac 1n em^T\in
\R^{n\times k}$. Then $\widehat X \in \EE$, and Lemma~\ref{lem:paramE}
\index{$\widehat X=\frac 1n em^T$}
states that $\QQ$ is an invertible
transformation between $\R^{(n-1)\times(k-1)}$ and $\EE - \widehat X$.
Moreover, from
\eqref{eq:XZrelate}, we see that $X\in \EE$ if, and only if,
\begin{equation}\label{eq:XZrelate2}
\begin{array}{rcl}
X
&=&
P \begin{bmatrix}  1 & 0 \cr 0 & Z \end{bmatrix}  Q^T \tilde M
\\&=&
\begin{bmatrix} \frac e{\sqrt n} & V \end{bmatrix}
\begin{bmatrix} 1 & 0 \cr 0 & Z \end{bmatrix}
\begin{bmatrix} \frac {1}{\sqrt n}\tilde m^T \\ W^T \end{bmatrix} \tilde M
\\&=&
\frac 1n e m^T + VZW^T\tilde M
\\&=&
\widehat X + VZW^T\tilde M,
\end{array}
\end{equation}
for some $Z$.
Thus, the set $\EE$ can be parametrized using $\widehat X + VZW^T\tilde M$.

We are now ready to describe our two projected eigenvalue bounds. We remark that \eqref{eq:objprojtwo} and
the first inequality in \eqref{eq:Lprojeigbnd}
were already discussed in Proposition~3, Theorem~1 and Theorem~3 in
\cite{ReLiPi:13}. We include them for completeness. We note that the
notation in Lemma~\ref{lem:paramE}, equation \eqref{eq:XZrelate2} and the
next theorem will also be used frequently in Section~\ref{sect:qpbnd} when we discuss the QP lower bound.

\begin{theorem}
\label{thm:projeig}
Let $d\in \Rn$, $G=A-\Diag(d)$.
Let $V$, $W$ be defined in \eqref{eq:defPQ} and $\widehat X=\frac 1n em^T\in
\R^{n\times k}$.
Then:
\begin{enumerate}
\item
\label{item:projbndseqns}
For any $X\in \EE$ and $Z\in \R^{(n-1)\times(k-1)}$ related by
\eqref{eq:XZrelate2}, we have
\begin{equation}
\label{eq:objproj}
\begin{array}{rcl}
\tr GXBX^T &=&
\alpha +
\trace\widehat GZ\widehat BZ^T+
\trace CZ^T
\\ &=&
-\alpha+\tr\widehat GZ\widehat BZ^T+
2\tr G \widehat X BX^T,
\end{array}
\end{equation}
and
\begin{equation}
\begin{array}{rcl}
\label{eq:objprojtwo}
\trace(-L)XBX^T
&=&
\trace\widehat LZ\widehat BZ^T,
\end{array}
\end{equation}
where
\begin{equation}\label{eq:hats}
\widehat{G}=V^TGV, \widehat{L}=V^T(-L)V,~~
\widehat{B}=W^T\tilde{M}B\tilde{M}W,~~
\alpha=\frac 1{n^2}(e^TGe)(m^TBm),~~
C= 2V^TG \widehat X B\tilde{M}W.
\end{equation}
\item
\label{item:2projbnds}
We have the following two lower bounds:
\begin{enumerate}
\item
\label{item:firstAprojbnds}
\beq
\label{eq:Aprojeigbnd}
\begin{array}{rcl}
\cut(m)
&\geq& p^*_{projeig}(G)
:= \displaystyle\frac 12  \left\{
      -\alpha+
\left\langle \lambda(\widehat G),\begin{pmatrix}
         \lambda(\widehat{B})\\0
       \end{pmatrix}\right\rangle_-
+
2\min\limits_{X \in \DD} \trace G\widehat XBX^T
\right\}
\\&=& \displaystyle\frac 12  \left\{
       \alpha+
       \left\langle \lambda(\widehat G),\begin{pmatrix}
         \lambda(\widehat{B})\\0
       \end{pmatrix}\right\rangle_-
+
\min\limits_{0\leq \widehat X + VZW^T\tilde M} \trace CZ^T
\right\}
\\&=& \displaystyle\frac 12  \left\{
      -\alpha+
\sum_{i=1}^{k-2} \lambda_{k-i}(\widehat B)\lambda_i(\widehat G)+
\lambda_{1}(\widehat B)\lambda_{n-1}(\widehat G) +
2\min\limits_{X \in \DD} \trace G\widehat XBX^T
\right\}.
\end{array}
\eeq
\item
\beq
\label{eq:Lprojeigbnd}
\begin{array}{rcl}
\\ \cut(m)
\geq p^*_{projeig}(-L)
&:=&
\displaystyle\frac 12 \left\langle \lambda(\widehat L),\begin{pmatrix}
         \lambda(\widehat{B})\\0
       \end{pmatrix}\right\rangle_-
\geq  p^*_{eig}(-L).
\end{array}
\eeq
\end{enumerate}
\item
The eigenspaces of $V^TLV$
correspond to the eigenspaces of $L$ that are orthogonal to $e$.
\end{enumerate}
\end{theorem}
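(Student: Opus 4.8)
The plan is to exploit two facts: that $P=\begin{bmatrix}\frac1{\sqrt n}e & V\end{bmatrix}\in\OO_n$ makes the columns of $V$ an orthonormal basis of the hyperplane $e^\perp$, so that $V$ is an isometry of $\R^{n-1}$ onto $e^\perp$; and that $e$ is an eigenvector of $L$. From $P\in\OO_n$ one reads off $V^TV=I_{n-1}$, $V^Te=0$, $\range(V)=e^\perp$, and $VV^T=I_n-\frac1n ee^T$, the orthogonal projector onto $e^\perp$. Since $L=\Diag(Ae)-A$ is symmetric and $Le=0$, the subspace $e^\perp$ is $L$-invariant, and the point of the statement is precisely that $V^TLV$ is the matrix of the compression $L|_{e^\perp}$ in the orthonormal basis given by the columns of $V$.

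First I would push an eigenvector of $L$ lying in $e^\perp$ down to $V^TLV$: if $Lv=\lambda v$ with $e^Tv=0$, then $v=VV^Tv$ because $VV^T$ fixes $e^\perp$ pointwise, hence $V^TLV(V^Tv)=V^TL(VV^Tv)=V^TLv=\lambda V^Tv$, and $V^Tv\neq 0$ since $0\neq v\in e^\perp=\range(V)$. Conversely, given $V^TLVu=\mu u$ with $u\neq 0$, set $v:=Vu\in e^\perp\setminus\{0\}$; the key observation is that $LVu$ is again orthogonal to $e$, because $e^TLVu=(Le)^TVu=0$, so that $VV^T(LVu)=LVu$. Multiplying $V^TLVu=\mu u$ on the left by $V$ then yields $LVu=VV^TLVu=\mu Vu$, i.e. $Lv=\mu v$ with $v\perp e$.

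Finally I would package the two maps $v\mapsto V^Tv$ and $u\mapsto Vu$ as mutually inverse (since $V^TV=I_{n-1}$ and $VV^T$ is the identity on $e^\perp$) and norm-preserving linear bijections, and observe that for each scalar $\lambda$ they restrict to mutually inverse isometric isomorphisms between the $\lambda$-eigenspace of $V^TLV$ and the intersection of the $\lambda$-eigenspace of $L$ with $e^\perp$; in particular $V^TLV$ and $L|_{e^\perp}$ have identical spectra and multiplicities, which is what permits computing $\lambda(\widehat L)$ in \eqref{eq:Lprojeigbnd} directly from the sparse matrix $L$. I do not expect a real obstacle; the only step needing care is the converse, where one must notice that $LVu\in e^\perp$ automatically, so the projector $VV^T$ can be removed and $Vu$ is a genuine eigenvector of $L$ rather than merely of $VV^TL$.
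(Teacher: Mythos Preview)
Your proposal addresses only Part~3 of the theorem, the eigenspace correspondence for $V^TLV$. For that part your argument is correct and in fact more explicit than the paper's, which dispatches it in a single sentence (``Since $Le=0$ and the columns of $V$ are orthogonal to $e$, the last conclusion of the theorem follows immediately''). Your bijection $v\mapsto V^Tv$, $u\mapsto Vu$ and the observation that $LVu\in e^\perp$ so that $VV^T$ can be dropped are exactly what lies behind that one line.

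The gap is that the theorem has three parts, and Parts~1 and~2 are its substance; your proposal does not touch them. Part~1 requires substituting the parametrization $X=\widehat X+VZW^T\tilde M$ into $\tr GXBX^T$ and expanding into a constant term $\alpha=\tr G\widehat XB\widehat X^T$, a quadratic term $\tr\widehat GZ\widehat BZ^T$, and a linear cross term; the second line of \eqref{eq:objproj} comes from rewriting the cross term as $2\tr G\widehat XB(X-\widehat X)^T$, and \eqref{eq:objprojtwo} from $L\widehat X=0$. Part~2 is the actual lower-bound argument: one relaxes $\DD\cap\DD_O$ by minimizing the quadratic piece over $\EE\cap\DD_O$ (where Lemma~\ref{lem:paramE} and the Hoffman--Wielandt Theorem~\ref{thm:HWineq} give the minimal scalar product) and the linear piece separately over $\DD$, then uses eigenvalue interlacing for $\widehat B$ against $\tilde B$ to obtain the explicit sum in the last line of \eqref{eq:Aprojeigbnd}. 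None of this machinery appears in your write-up, so as a proof of the stated theorem the proposal is incomplete.
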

\begin{proof}
After substituting the parametrization \eqref{eq:XZrelate2} into
the function $\tr GXBX^T$, we obtain a constant, quadratic, and linear term:
\[
\begin{array}{rcl}
\tr GXBX^T
&=&
\trace G (\widehat X + VZW^T\tilde M) B (\widehat X + VZW^T\tilde M)^T
\\&=&
\trace G \widehat X  B \widehat X^T+
\trace(V^TGV)Z(W^T\tilde{M}B\tilde{M}W)Z^T+
\trace 2 V^TG \widehat X B\tilde{M}WZ^T
\end{array}
\]
and
\[
\begin{array}{rcl}
\tr GXBX^T
&=&
\trace G \widehat X  B \widehat X^T+
\trace(V^TGV)Z(W^T\tilde{M}B\tilde{M}W)Z^T+
2 \trace G\widehat X B (VZW^T\tilde M)^T
\\&=&
\trace G \widehat X  B \widehat X^T+
\trace(V^TGV)Z(W^T\tilde{M}B\tilde{M}W)Z^T+
2 \trace G\widehat X B (X-\widehat X)^T
\\&=&
\trace  (-G) \widehat X  B \widehat X^T+
\trace(V^TGV)Z(W^T\tilde{M}B\tilde{M}W)Z^T+
2 \trace G\widehat X B X^T .
\end{array}
\]
These together with \eqref{eq:hats} yield the two equations in \eqref{eq:objproj}.
Since $Le=0$ and hence $L\widehat X=0$, we obtain \eqref{eq:objprojtwo} on replacing $G$ with $-L$ in the above relations.
This proves Item \ref{item:projbndseqns}.

We now prove \eqref{eq:Aprojeigbnd}, i.e.,~Item \ref{item:firstAprojbnds}.
 To this end, recall from \eqref{eq:Prob} and \eqref{eq:partmatrices} that
\[
\cut(m) = \min\left\{\frac12 \tr GXBX^T:\; X\in \DD\cap\DD_O\right\}.
\]
Combining this with \eqref{eq:objproj}, we see further that
\begin{equation}\label{eq:proof1}
  \begin{split}
    \cut(m) &= \frac12\left( -\alpha+\min_{X\in \DD\cap\DD_O}
        \left\{\trace \widehat GZ\widehat BZ^T+
2 \tr G\widehat X B X^T\right\}\right)\\
&\ge \frac12\left( -\alpha+\min_{ X\in \EE\cap\DD_O}\trace
\widehat GZ\widehat BZ^T+
2 \min_{ X\in \DD}\tr G\widehat X B X^T \right)\\
&= \frac 12  \left(
      -\alpha+
\left\langle \lambda(\widehat G),\begin{pmatrix}
         \lambda(\widehat{B})\\0
       \end{pmatrix}\right\rangle_-
+
2\min_{X \in \DD} \trace G\widehat XBX^T
\right)= p^*_{projeig}(G),
  \end{split}
\end{equation}
where $Z$ and $X$ are related via \eqref{eq:XZrelate2}, and the last equality follows from Lemma~\ref{lem:paramE} and
Theorem~\ref{thm:HWineq}. Furthermore, notice that
\begin{equation}\label{eq:XZCrelate}
\begin{split}
&-\alpha  + 2\min_{X \in \DD} \tr G\widehat XBX^T = \alpha + 2\min_{X
\in \DD} \tr G\widehat XB(X - \widehat X)^T\\
=\ & \alpha + 2\min_{0\le \widehat X + VZW^T\tilde M}\tr G\widehat X B (VZW^T\tilde M)^T = \alpha + \min_{0\le \widehat X + VZW^T\tilde M} \tr CZ^T,
\end{split}
\end{equation}
where the second equality follows from Lemma~\ref{lem:paramE}, and the last equality follows from the definition of $C$ in \eqref{eq:hats}.
Combining this last relation with \eqref{eq:proof1} proves the
first two equalities in \eqref{eq:Aprojeigbnd}. The last equality in \eqref{eq:Aprojeigbnd}
follows from the fact that
\begin{equation}\label{eq:eigwidehatB}
\lambda_k(\tilde B)\le \lambda_{k-1}(\widehat B)\le \lambda_{k-1}(\tilde B)\le \cdots\le \lambda_2(\tilde B)= 0 \le \lambda_1(\widehat B)\le \lambda_1(\tilde B),
\end{equation}
which is a consequence of the eigenvalue interlacing theorem
\cite[Corollary~4.3.16]{hj85},
the definition of $\widehat B$ and Lemma~\ref{lem:eigBtilde}.

Next, we prove \eqref{eq:Lprojeigbnd}. Recall again from \eqref{eq:Prob} and \eqref{eq:partmatrices} that
\[
  \cut(m) = \min\left\{\frac12 \tr(-L)XBX^T:\; X\in \DD\cap\DD_O\right\}.
\]
Using \eqref{eq:objprojtwo}, we see further that
\[
\begin{split}
  \cut(m)& \ge \frac12\min\left\{ \tr(-L)XBX^T:\; X\in \EE\cap\DD_O\right\}\\
  & = \frac12\min\left\{ \tr\widehat LZ\widehat BZ^T:\; X\in \EE\cap\DD_O\right\}\\
  & = \frac 12 \left\langle \lambda(\widehat L),\begin{pmatrix}
         \lambda(\widehat{B})\\0
       \end{pmatrix}\right\rangle_- (= p^*_{projeig}(-L))\\
  & \ge \min\left\{\frac12 \tr(-L)XBX^T:\; X\in \DD_O\right\},
\end{split}
\]
where $Z$ and $X$ are related via \eqref{eq:XZrelate2}.
The last inequality follows since the constraint $X\in \EE$ is dropped.

Since $Le = 0$ and the columns of $V$ are orthogonal to $e$, the last conclusion of the theorem follows immediately.
\end{proof}

\begin{remark}
Let $Q \in \R^{(k-1)\times (k-1)}$ be the orthogonal matrix with
columns consisting of the eigenvectors of $\widehat B$, defined in \eqref{eq:hats}, corresponding to
eigenvalues of $\widehat B$ in nondecreasing order;
let $P_G,P_L \in \R^{(n-1)\times (k-1)}$ be the matrices with orthonormal
columns consisting of $k-1$ eigenvectors of $\widehat G, \widehat L$,
respectively, corresponding to the largest $k-2$ in nonincreasing order followed by the smallest.
From \eqref{eq:eigwidehatB} and Theorem~\ref{thm:HWineq},
the minimal scalar product terms in \eqref{eq:Aprojeigbnd} and
\eqref{eq:Lprojeigbnd}, respectively, are attained at
\begin{equation}
\label{eq:ZattainAL}
Z_G=P_G Q^T,\quad  Z_L=P_L Q^T,
\end{equation}
respectively, and two corresponding points in $\EE$ are given, according to \eqref{eq:XZrelate2}, respectively, by
\begin{equation}
\label{eq:XattainAL}
X_G= \widehat X + V Z_GW^T\tilde M, \quad
X_L= \widehat X + V Z_LW^T\tilde M.
\end{equation}
The linear programming problem, \LP, in \eqref{eq:Aprojeigbnd} can be 
solved explicitly; see Lemma \ref{lem:expl} below.
Since the condition number for the symmetric eigenvalue problem is $1$,
e.g.,~\cite{MR98m:65001}, the above shows that we can find the
projected eigenvalue bounds very accurately. In addition, we need only
find $k-1$ eigenvalues of $\widehat G$, $\widehat B$. Hence, if the number of sets
$k$ is small relative to the number of nodes $n$ and the adjacency matrix
$A$ is sparse, then we can find bounds for large problems both
efficiently and accurately; see Section \ref{sect:numertestslarge}.
\end{remark}

\begin{remark}
We emphasize again that although the objective function in \eqref{eq:Prob} is
equivalent for all $d\in \Rn$ on
the set of partition matrices $\Mcal_m$, this is not true once we
relax this feasible set. Though there are advantages to using the
Laplacian matrix as shown in \cite{ReLiPi:13} in terms of simplicity of
the objective function, our numerics suggest that the bound
$p^*_{projeig}(A)$ obtained from
using the adjacency matrix $A$ is stronger than $p^*_{projeig}(-L)$.
Numerical tests confirming this are given  in Section \ref{sect:numertests}.
\end{remark}

\subsubsection{Explicit Solution for Linear Term}
\label{sect:expllin}

The constant term $\alpha$ and eigenvalue minimal scalar product
term of the bound $p^*_{projeig}(G)$ in
\eqref{eq:Aprojeigbnd} can be found efficiently using the two quadratic
forms for $\widehat G$, $\widehat B$ and finding $k-1$ eigenvalues from them.
We now show that
the third term, i.e., the linear term, can also be found efficiently.
Precisely, we give an explicit solution to the linear optimization problem
in \eqref{eq:Aprojeigbnd} in Lemma \ref{lem:expl}, below.

Notice
that in \eqref{eq:Aprojeigbnd}, the minimization is taken over $X\in \DD$, which is shown to be the convex hull of
the set of partition matrices $\Mcal_m$. As mentioned above, this
essentially follows from the Birkhoff and Von Neumann theorems,
see e.g.,~\cite{MR874114}.
Thus, to solve the linear programming problem in \eqref{eq:Aprojeigbnd}, it suffices to consider minimizing the same objective over the nonconvex
set $\Mcal_m$ instead.
\begin{lemma}
\label{lem:expl}
Let $d\in \Rn$, $G=A-\Diag(d)$, $\widehat X=\frac 1n em^T \in \Mcal_m$ and
\[
v_0=
\begin{bmatrix}
(n-m_k-m_1) e_{m_1}
\\
(n-m_k-m_2) e_{m_2}
\\
\vdots
\\
(n-m_k-m_{k-1}) e_{m_{k-1}}
\\
0 e_{m_{k}}
\end{bmatrix},
\]
where $e_j \in \R^j$ is the vector of ones of dimension $j$. Then
\[
\begin{array}{cccc}
 \min\limits_{X\in \Mcal_m} \tr G\widehat XBX^T= \displaystyle\frac 1n\langle Ge,v_0\rangle_-.
\end{array}
\]
\end{lemma}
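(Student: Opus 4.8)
The plan is to unwind the trace expression $\tr G\widehat{X}BX^T$ for a partition matrix $X \in \Mcal_m$ into an ordinary scalar product of two fixed vectors — one depending only on $G$ through $Ge$, the other a fixed vector depending only on the partition sizes $m$ — and then recognize the minimum over $\Mcal_m$ as a minimal scalar product. First I would substitute $\widehat{X} = \frac{1}{n}em^T$ and compute $\widehat{X}B = \frac{1}{n}e m^T B$. Using the block form of $B$, one has $m^T B = \big((\sum_{j<k} m_j)e^T - m^T\big|_{1:k-1},\; 0\big)$ componentwise, i.e.\ the $\ell$-th entry of $m^TB$ is $(n - m_k - m_\ell)$ for $\ell < k$ and $0$ for $\ell = k$. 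Hence $\widehat{X}B = \frac{1}{n} e\, w^T$ where $w \in \R^k$ is the vector with entries $w_\ell = n - m_k - m_\ell$ for $\ell<k$ and $w_k = 0$.

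Next I would plug this into the trace: $\tr G\widehat{X}BX^T = \frac{1}{n}\tr\big(G e w^T X^T\big) = \frac{1}{n}(Ge)^T X w = \frac{1}{n}\langle Ge,\, Xw\rangle$. Now the key observation is that, for a partition matrix $X \in \Mcal_m$ (whose columns are the $0/1$ incidence vectors of the sets $S_1,\dots,S_k$ of sizes $m_1,\dots,m_k$), the vector $Xw = \sum_{\ell=1}^k w_\ell X_{:\ell}$ is precisely the vector that assigns the value $w_\ell$ to every node in $S_\ell$. As $X$ ranges over $\Mcal_m$, the vector $Xw$ ranges over exactly those vectors in $\R^n$ obtained by taking the fixed multiset of values — $w_1$ repeated $m_1$ times, $w_2$ repeated $m_2$ times, \dots, $w_k$ repeated $m_k$ times — and distributing them over the $n$ coordinates in all possible ways. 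That multiset-of-values vector, written in a fixed order, is exactly $v_0$ as defined in the statement (since $w_\ell = n - m_k - m_\ell$ and $v_0$ lists $(n-m_k-m_\ell)$ with multiplicity $m_\ell$, and $0$ with multiplicity $m_k$). Therefore $\{Xw : X \in \Mcal_m\} = \{\,\Pi v_0 : \Pi \text{ a permutation matrix}\,\}$, and
\[
\min_{X\in\Mcal_m} \tr G\widehat{X}BX^T = \frac{1}{n}\min_{\Pi}\langle Ge,\,\Pi v_0\rangle = \frac{1}{n}\langle Ge, v_0\rangle_-,
\]
by the definition of the minimal scalar product. Since $\DD = \conv(\Mcal_m)$ (Proposition~\ref{prop:partmatrices}) and the objective is linear in $X$, minimizing over $\DD$ gives the same value, which is what is needed for \eqref{eq:Aprojeigbnd}.

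The only real bookkeeping obstacle is the first step — correctly evaluating $m^T B$ from the block definition of $B$ and matching signs and indices so that the resulting value vector is literally $v_0$ rather than, say, its negative or a shifted version; once $\widehat{X}B = \frac1n e w^T$ is pinned down with $w_\ell = n-m_k-m_\ell$, the rest is the purely combinatorial identification of $\{Xw\}$ with the permutation orbit of $v_0$ together with an appeal to the already-established convex-hull description of $\DD$. I do not anticipate any analytic difficulty; the argument is finite and self-contained given the earlier results.
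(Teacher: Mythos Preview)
Your proposal is correct and follows essentially the same route as the paper. The paper's proof parametrizes $\Mcal_m$ as $\{PX_0 : P\text{ a permutation matrix}\}$ for the canonical block partition matrix $X_0$, then computes $X_0Bm = v_0$ to reach $\tr G\widehat X B X^T = \frac{1}{n}\langle Ge, Pv_0\rangle$; you instead compute $\widehat X B = \frac{1}{n}ew^T$ with $w = Bm$ and identify $\{Xw : X\in\Mcal_m\}$ with the permutation orbit of $v_0$ --- but since $Xw = PX_0w = Pv_0$, these are the same calculation organized in a different order.
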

\begin{proof}
Let $X_0$ denote the feasible partition matrix
\begin{equation}
\label{eq:X0}
X_0 = \begin{bmatrix}
  e_{m_1} & 0 & \cdots & 0\\
  0 & e_{m_2} & \cdots & 0\\
  \vdots & \ddots & \ddots & \vdots\\
  0 & \cdots & 0 & e_{m_k}
\end{bmatrix} \in \Mcal_m.
\end{equation}
Then it is clear that $X\in \Mcal_m$ if, and only if, there exists a permutation matrix $P$ on $\{1,\ldots,n\}$ so that
$X = PX_0$. Using this observation and letting $S_N$ denote the set of permutation matrices on $\{1,\ldots,n\}$, we have
\begin{equation*}
  \begin{split}
   \min_{X\in \Mcal_m} \tr G\widehat XBX^T & = \frac{1}{n}\min_{P\in S_N}\tr Ge m^TBX_0^TP^T \\
   & = \frac{1}{n}\min_{P\in S_N}\tr Ge (X_0 Bm)^TP^T\\
   & = \frac{1}{n}\min_{P\in S_N}\tr Ge v_0^TP^T = \frac{1}{n}\langle Ge,v_0\rangle_-,
  \end{split}
\end{equation*}
where the last equality follows from the definition of minimal scalar product.
\end{proof}


\section{Quadratic Programming Lower Bound}
\label{sect:qpbnd}

A new successful and efficient bound used for the
QAP is given in \cite{AnsBrix:99,BrixiusAnstr:01}.
In this section, we adapt the idea described there to obtain a lower bound
for ${\rm cut}(m)$. This bound uses a relaxation that is a {\em convex} QP,
i.e.,~the minimization of a quadratic function that is convex on
the feasible set defined by linear inequality constraints. Approaches based
on nonconvex QPs are given in e.g.,~\cite{HagerHung:13} and the references
therein.
\index{QP, quadratic program}
\index{quadratic program, QP}
\index{QAP, quadratic assignment problem}
\index{quadratic assignment problem, QAP}

The main idea in \cite{AnsBrix:99,BrixiusAnstr:01} is to use the
zero duality gap result for a homogeneous QAP
\cite[Theorem~3.2]{AnWo:98} on an objective obtained via a suitable reparametrization of the original problem.
Following this idea, we consider the parametrization in \eqref{eq:objproj}
where our main objective in \eqref{eq:Prob} is rewritten as:
\begin{equation}\label{relation}
\frac 12 \tr GXBX^T = \frac 12 \left(\alpha +
\trace\widehat GZ\widehat BZ^T+
\trace CZ^T\right)
\end{equation}
with $X$ and $Z$ related according to \eqref{eq:XZrelate}, and $G = A - \Diag(d)$ for some $d\in \Rn$.
We next look at the homogeneous part:
\begin{equation}\label{eq:1strelaxation}
  \begin{array}{rl}
    v_r^*:= \min & \frac 12\trace\widehat G Z\widehat B Z^T\\
    {\rm s.t.} & Z^TZ = I.
  \end{array}
\end{equation}
Notice that the constraint $ZZ^T\preceq I$ is redundant for the above problem. By adding this redundant constraint, the corresponding Lagrange dual problem is given by
\begin{equation}\label{eq:1strelaxationdual}
  \begin{array}{rl}
    v_{dsdp}:= \max  & \frac{1}{2}\trace S + \frac{1}{2}\trace T\\
    \text{s.t.} &
      I_{k-1} \otimes S + T\otimes I_{n-1} \preceq
      \widehat B\otimes \widehat G, \\
         &   S\preceq 0, \\
         & S \in \S^{n-1},\ T\in \S^{k-1},
  \end{array}
\end{equation}
where the variables $S$ and $T$ are the dual variables corresponding to the constraints $ZZ^T \preceq I$ and $Z^TZ = I$, respectively.
It is known that $v^*_r= v_{dsdp}$; see \cite[Theorem~2]{PoRe:08}.
This latter problem \eqref{eq:1strelaxationdual} can be solved efficiently.
For example, as in the proofs of \cite[Theorem~3.2]{AnWo:98}
and \cite[Theorem~2]{PoRe:08},
one can take advantage of the properties of the Kronecker product
and orthogonal diagonalizations of $\widehat B, \widehat G$,
to reduce the problem to solving the following \LP
with $n+k-2$ variables,
\begin{equation}\label{eq:LP}
  \begin{array}{rl}
    \max  & \frac{1}{2}e^Ts + \frac{1}{2}e^Tt\\
    \text{s.t.} &
      t_i + s_j \le \lambda_i\sigma_j,\ \ i = 1,\ldots, k-1,\ \ j = 1,\ldots,n-1,\\
      & s_j \le 0,\ \ j = 1,\ldots,n-1,
  \end{array}
\end{equation}
where
\begin{equation}\label{eq:eigAB}
\widehat B = U_1\Diag(\lambda) U_1^T\ \ {\rm and}\ \ \widehat G = U_2\Diag(\sigma) U_2^T
\end{equation}
are eigenvalue orthogonal decompositions of $\widehat B$ and $\widehat G$,
respectively. From an optimal solution $(s^*,t^*)$ of \eqref{eq:LP},
we can recover an optimal solution of \eqref{eq:1strelaxationdual} as
\begin{equation}\label{eq:ST}
S^* = U_2\Diag(s^*)U_2^T\ \ \ \ \ T^* = U_1\Diag(t^*)U_1^T.
\end{equation}

Next, suppose that the optimal value of the dual problem \eqref{eq:1strelaxationdual} is attained at $(S^*,T^*)$. Let $Z$ be such that the $X$ defined according to \eqref{eq:XZrelate}
is a partition matrix. Then we have
\begin{equation*}
\begin{split}
    \frac{1}{2}\trace(\widehat G Z\widehat B Z^T)
    &
= \frac{1}{2}\kvec(Z)^T(\widehat B\otimes \widehat G)
\kvec(Z)\\
    &= \frac{1}{2}\kvec(Z)^T\underbrace{(\widehat B\otimes \widehat G - I \otimes S^* - T^*\otimes I)}_{\widehat Q} \kvec(Z) + \frac{1}{2}\trace(ZZ^TS^*) + \frac{1}{2}\trace(T^*)\\
    & = \frac{1}{2}\kvec(Z)^T\widehat Q\kvec(Z) + \frac{1}{2}\trace([ZZ^T - I]S^*) + \frac{1}{2}\trace(S^*) + \frac{1}{2}\trace(T^*)\\
    & \ge \frac{1}{2}\kvec(Z)^T\widehat Q\kvec(Z) + \frac{1}{2}\trace(S^*) + \frac{1}{2}\trace(T^*),
\end{split}
\end{equation*}
where the last inequality uses $S^*\preceq 0$ and $ZZ^T\preceq I$.

Recall that the original nonconvex problem \eqref{eq:Prob} is
equivalent to minimizing the right hand side of \eqref{relation} over the set of all $Z$ so that the
$X$ defined in \eqref{eq:XZrelate} corresponds to a partition matrix. From the above relations, the third equality in \eqref{eq:partmatrices}
and Lemma~\ref{lem:paramE}, we see that
\begin{equation}\label{eq:formulation}
\begin{array}{rc}
  \cut(m)\ge \min & \frac 12(\alpha +\trace C Z^T + \kvec(Z)^T\widehat Q \kvec(Z)) + \frac{1}{2}\trace(S^*) + \frac{1}{2}\trace(T^*)\\
  [3 pt]
  {\rm s.t.}&  Z^TZ = I_{k-1},\ \ VZW^T\tilde M\ge -\widehat X.
\end{array}
\end{equation}
We also recall from \eqref{eq:1strelaxationdual} that $\frac{1}{2}\trace(S^*) + \frac{1}{2}\trace(T^*)= v_{dsdp} = v_r^*$, which
further equals
\begin{equation*}
  \frac 12\left\langle \lambda(\widehat G),\begin{pmatrix}
         \lambda(\widehat{B})\\0
       \end{pmatrix}\right\rangle_-
\end{equation*}
according to \eqref{eq:1strelaxation} and Theorem~\ref{thm:HWineq}.

A lower bound can now be obtained by relaxing the constraints in \eqref{eq:formulation}.
For example, by dropping the orthogonality constraints, we obtain the following lower bound on $\cut(m)$:
\begin{equation}\label{eq:qplb2}
  \begin{array}{rl}
    p^*_{QP}(G) := \min  & q_1(Z) := \frac 12\left(\alpha +\trace C Z^T +
\kvec(Z)^T\widehat Q \kvec(Z) + \left\langle \lambda(\widehat G),\begin{pmatrix}
         \lambda(\widehat{B})\\0
       \end{pmatrix}\right\rangle_-\right)\\
    {\rm s.t.} & V ZW^T\tilde M\ge -\widehat X,
  \end{array}
\end{equation}
Notice that this is a QP with $(n-1)(k-1)$ variables and $nk$ constraints.

As in \cite[Page~346]{AnsBrix:99}, it is possible to reformulate \eqref{eq:qplb2} into a QP in variables $X\in \DD$.
Note that $\tilde Q$ defined in \eqref{eq:tildeQ0} is not positive semidefinite in general. Nevertheless, the QP is implicitly convex.
\begin{theorem}
Let $S^*,T^*$ be optimal solutions of \eqref{eq:1strelaxationdual} as
defined in \eqref{eq:ST}.
A lower bound on $\cut(m)$ is obtained from the following QP:
  \begin{equation}\label{eq:qplb3}
  \cut(m) \geq p^*_{QP}(G) = \min_{X\in \DD}\frac{1}{2}\kvec(X)^T\tilde Q\kvec(X) +
  \frac{1}{2}\left\langle \lambda(\widehat G),\begin{pmatrix}
         \lambda(\widehat{B})\\0
       \end{pmatrix}\right\rangle_-
  \end{equation}
  where
  \begin{equation}\label{eq:tildeQ0}
  \tilde Q := B\otimes G - M^{-1}\otimes VS^*V^T - {\tilde M}^{-1}WT^*W^T{\tilde M}^{-1}\otimes I_n.
  \end{equation}
  The QP in \eqref{eq:qplb3} is implicitly convex since $\tilde Q$ is positive semidefinite on the tangent space of $\EE$.
\end{theorem}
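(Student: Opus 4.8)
The plan is to convert the $Z$-parametrized QP in \eqref{eq:qplb2} into an $X$-parametrized QP by unwinding the change of variables $X = \widehat X + VZW^T\tilde M$ of \eqref{eq:XZrelate2}, and to verify that the quadratic form appearing in \eqref{eq:qplb3} is exactly $q_1(Z)$ (up to the already-extracted minimal scalar product constant) when $X$ and $Z$ are related this way and $X \in \EE$. First I would substitute $X = \widehat X + VZW^T\tilde M$ into $\frac12 \kvec(X)^T \tilde Q \kvec(X)$ with $\tilde Q$ as in \eqref{eq:tildeQ0}, expand the three Kronecker-product pieces $B\otimes G$, $M^{-1}\otimes VS^*V^T$, and $\tilde M^{-1}WT^*W^T\tilde M^{-1}\otimes I_n$ separately, and match terms: the $B\otimes G$ piece reproduces $\alpha + \trace CZ^T + \trace \widehat G Z\widehat B Z^T$ via exactly the computation already done in Item~\ref{item:projbndseqns} of Theorem~\ref{thm:projeig} (since $\widehat G = V^TGV$, $\widehat B = W^T\tilde M B\tilde M W$, and $C = 2V^TG\widehat X B\tilde M W$). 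The two subtracted pieces, when restricted to $X$ of the form $\widehat X + VZW^T\tilde M$, should collapse to $\trace(ZZ^T S^*) + \trace(T^*)$ — here one uses $V^TV = I_{n-1}$, $W^T W = I_{k-1}$, $\tilde M^{-1}\tilde M = I$, and the fact that $\widehat X = \frac1n em^T$ lies in the kernel of the relevant projections (because $V^Te = 0$ and $W^T\tilde m = 0$, so $V^T\widehat X = 0$ and $\widehat X \tilde M W = \frac1n e m^T \tilde M W = \frac1n e (\tilde M m)^T W$, which vanishes against the column space of $W$ after the $\tilde M^{-1}$ scaling is tracked carefully). Assembling these, $\frac12\kvec(X)^T\tilde Q\kvec(X) = \frac12(\alpha + \trace CZ^T + \kvec(Z)^T\widehat Q\kvec(Z))$ for $X\in\EE$, which together with the $ZZ^T\preceq I$ vs.\ $VZW^T\tilde M \ge -\widehat X$ bookkeeping identifies the minimum over $X\in\DD$ with $p^*_{QP}(G)$ as defined in \eqref{eq:qplb2}; the inequality $\cut(m)\ge p^*_{QP}(G)$ is then inherited from \eqref{eq:formulation}.

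For the implicit convexity claim, the plan is to show $\kvec(Z)^T$-restricted positivity: on the tangent space of $\EE$ (equivalently, for $X = VZW^T\tilde M$ with $Z$ arbitrary, i.e.\ the linear span of $\EE - \widehat X$), we have $\kvec(X)^T\tilde Q\kvec(X) = \kvec(Z)^T\widehat Q\kvec(Z)$, and $\widehat Q = \widehat B\otimes\widehat G - I_{k-1}\otimes S^* - T^*\otimes I_{n-1} \succeq 0$ by dual feasibility of $(S^*,T^*)$ in \eqref{eq:1strelaxationdual} (the constraint $I_{k-1}\otimes S + T\otimes I_{n-1}\preceq \widehat B\otimes\widehat G$ is precisely $\widehat Q\succeq 0$). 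So $\tilde Q$ is positive semidefinite on that subspace, which is the tangent space of the affine set $\EE$, and minimizing a quadratic that is convex along every feasible direction over the polytope $\DD\subset\EE$ is a convex program.

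I expect the main obstacle to be the Kronecker-product bookkeeping in the two subtracted terms of $\tilde Q$: one must be scrupulous about the placement of $M^{-1} = \tilde M^{-2}$ versus $\tilde M^{-1}$, about the order of factors in $\kvec(VZW^T\tilde M) = (\tilde M W \otimes V)\kvec(Z)$, and about which $\widehat X$-cross-terms vanish (they should, because $V^Te=0$ and $W^T\tilde m = 0$, but this needs to be tracked through the $\tilde M$-scalings rather than asserted). A secondary subtlety is making precise the passage from ``$ZZ^T\preceq I$ is redundant/relaxed'' to the $X\in\DD$ feasible region: the cleanest route is to note that for $X\in\Mcal_m$ one has $Z^TZ = I_{k-1}$ hence $ZZ^T\preceq I_{n-1}$, so the inequality $\frac12\trace(\widehat G Z\widehat B Z^T)\ge \frac12\kvec(Z)^T\widehat Q\kvec(Z) + v^*_r$ derived in the displayed computation preceding \eqref{eq:formulation} holds on $\Mcal_m$, and then extends to its convex hull $\DD$ because the right-hand side — once rewritten in $X$ via $\tilde Q$ — is the convex-on-$\EE$ quadratic just analyzed. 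Everything else is a direct assembly of Theorem~\ref{thm:projeig}, the identity $v^*_r = v_{dsdp} = \frac12\langle\lambda(\widehat G),(\lambda(\widehat B);0)\rangle_-$, and the definitions in \eqref{eq:hats} and \eqref{eq:tildeQ0}.
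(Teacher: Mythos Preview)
Your approach is essentially the same as the paper's: both establish the identity $\kvec(X)^T\tilde Q\,\kvec(X)=\alpha+\trace CZ^T+\kvec(Z)^T\widehat Q\,\kvec(Z)$ for $X=\widehat X+VZW^T\tilde M$, by checking that the $\widehat X$-cross-terms in the $S^*$ and $T^*$ pieces vanish (via $V^Te=0$ and $W^T\tilde m=0$), and then invoke Lemma~\ref{lem:paramE} to translate the feasible set. The paper packages the quadratic step slightly more cleanly by first proving the factorization $\widehat Q=(\tilde MW\otimes V)^T\tilde Q\,(\tilde MW\otimes V)$, which yields both the quadratic identity and the implicit-convexity claim at once; you do the same computation piece by piece.

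One concrete correction: the two subtracted pieces of $\tilde Q$, evaluated at $X=\widehat X+VZW^T\tilde M$, collapse to $\trace(ZZ^TS^*)+\trace(Z^TZT^*)$, not $\trace(ZZ^TS^*)+\trace(T^*)$ as you wrote (the $T^*$ term gives $\kvec(Z)^T(T^*\otimes I_{n-1})\kvec(Z)=\trace(Z^TZT^*)$, which equals $\trace(T^*)$ only when $Z^TZ=I$). With that fix your assembly goes through exactly, since $\trace\widehat GZ\widehat BZ^T-\trace(ZZ^TS^*)-\trace(Z^TZT^*)=\kvec(Z)^T\widehat Q\,\kvec(Z)$ by the definition of $\widehat Q$.
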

\begin{proof}
  We start by rewriting the second-order term of $q_1$ in \eqref{eq:qplb2} using the relation \eqref{eq:XZrelate}.
  Since $V^TV = I_{n-1}$ and $W^TW=I_{k-1}$, we have from the definitions of $\widehat B$ and $\widehat G$ that
  \begin{equation}\label{eq:hatQtildeQ}
  \begin{split}
    \widehat Q &= \widehat B\otimes \widehat G - I_{k-1}\otimes S^* - T^*\otimes I_{n-1}\\
    &= W^T\tilde M B \tilde M W \otimes V^TGV - I_{k-1}\otimes S^* - T^*\otimes I_{n-1}\\
    &= (\tilde M W\otimes V)^T[B\otimes G - M^{-1}\otimes VS^*V^T - {\tilde M}^{-1}WT^*W^T{\tilde M}^{-1}\otimes I_n](\tilde M W\otimes V)
  \end{split}
  \end{equation}
  On the other hand, from \eqref{eq:XZrelate2}, we have
  \[
  \kvec(X - \widehat X) = \kvec(VZW^T\tilde M) = (\tilde M W\otimes V)\kvec(Z).
  \]
  Hence, the second-order term in $q_1$ can be rewritten as
  \begin{equation}\label{eq:Q0rewrite}
    \kvec(Z)^T\widehat Q\kvec(Z)
      = \kvec(X - \widehat X)^T\tilde Q\kvec(X - \widehat X),
  \end{equation}
  where $\tilde Q$ is defined in \eqref{eq:tildeQ0}.
  Next, we see from $V^T e = 0$ that
  \[
    (M^{-1}\otimes VS^*V^T)\kvec(\widehat X) = \frac{1}{n}(M^{-1}\otimes VS^*V^T)(m\otimes I_{n})e = \frac{1}{n}(e\otimes VS^*V^T)e = 0.
  \]
  Similarly, since $W^T\tilde m = 0$, we also have
  \[
  \begin{split}
    ({\tilde M}^{-1}WT^*W^T{\tilde M}^{-1}\otimes I_n)\kvec(\widehat X)
    &= \frac{1}{n}({\tilde M}^{-1}WT^*W^T{\tilde M}^{-1}\otimes I_n)(m\otimes I_{n})e\\
    & = \frac{1}{n}({\tilde M}^{-1}WT^*W^T\tilde m\otimes I_n)e = 0.
  \end{split}
  \]
  Combining the above two relations with \eqref{eq:Q0rewrite}, we obtain further that
  \[
  \begin{split}
    &\kvec(Z)^T\widehat Q\kvec(Z)\\
     = &\kvec(X)^T\tilde Q \kvec(X) - 2 \kvec(\widehat X)^T[B\otimes G]\kvec(X) + \kvec(\widehat X)[B\otimes G]\kvec(\widehat X)\\
     = &\kvec(X)^T\tilde Q \kvec(X) - 2\trace G\widehat X B X^T + \alpha.
  \end{split}
  \]
  For the first two terms of $q_1$, proceeding as in \eqref{eq:XZCrelate}, we have
  \[
  \alpha + \trace CZ^T = -\alpha + 2 \trace G\widehat X B X^T.
  \]
  Furthermore, recall from Lemma~\ref{lem:paramE} that with $X$ and $Z$ related by \eqref{eq:XZrelate},
  $X\in \DD$ if, and only if, $VZW^T\tilde M \ge - \widehat X$.

  The conclusion in \eqref{eq:qplb3} now follows by substituting the above expressions into \eqref{eq:qplb2}.

Finally, from \eqref{eq:hatQtildeQ} we see that $\tilde Q$ is positive semidefinite
when restricted to the range of $\tilde M W\otimes V$. This is precisely the tangent space of $\EE$.
\end{proof}

Although the dimension of the feasible set in \eqref{eq:qplb3} is slightly
larger than the dimension of the feasible set in \eqref{eq:qplb2},
the former feasible set is much simpler. Moreover, as mentioned above,
even though $\tilde Q$ is not positive semidefinite in general,
it is when restricted to the tangent space of $\EE$.
Thus, as in \cite{BrixiusAnstr:01}, one may apply the Frank-Wolfe algorithm
on \eqref{eq:qplb3} to approximately compute the QP lower bound $p^*_{QP}(G)$
for problems with huge dimension.

Since $\widehat Q\succeq 0$, it is easy to see from \eqref{eq:qplb2} that $p^*_{QP}(G)\ge p^*_{projeig}(G)$.
This inequality is not necessarily strict. Indeed, if $G = -L$, then $C=0$ and $\alpha = 0$ in \eqref{eq:qplb2}. Since the feasible set
of \eqref{eq:qplb2} contains the origin, it follows from this and the definition of $p^*_{projeig}(-L)$ that $p^*_{QP}(-L)= p^*_{projeig}(-L)$.
Despite this, as we see in the numerics Section \ref{sect:numertests},
we have $p^*_{QP}(A)> p^*_{projeig}(A)$ for most of our numerical experiments. In general, we still do not know what conditions will guarantee $p^*_{QP}(G)> p^*_{projeig}(G)$.

\section{Semidefinite Programming Lower Bounds}
\label{sect:sdpbnd}

In this section, we study the SDP relaxation constructed from the various equality constraints in the representation in
\eqref{eq:partmatrices} and the objective function in \eqref{eq:objfn}.

One way to derive an SDP relaxation for \eqref{eq:Prob} is to start by considering a suitable Lagrangian relaxation, which is itself an SDP.
Taking the dual of this Lagrangian relaxation then gives an SDP relaxation
for \eqref{eq:Prob}; see~\cite{KaReWoZh:94} and~\cite{WoZh:96} for the
development for the QAP and GP cases, respectively.
Alternatively, we can also obtain the {\em same} SDP relaxation directly
using the well-known \emph{lifting process},
e.g.,~\cite{BaCeCo:93,LoSc:91,ShAd:96,WoZh:96,KaReWoZh:94}.
In this approach, we start with the following equivalent quadratically
constrained quadratic problems
to \eqref{eq:Prob}:
\index{$A \circ B$, Hadamard product}
\index{Hadamard product, $A \circ B$}
\index{$G = A -\Diag(d), d\in \Rn$}
\beq
\label{eq:qqp1sq}
\begin{array}{rcllll}
\cut(m)
  =&\min& \frac{1}{2} \tr GXBX^T                  =&\min& \frac{1}{2} \tr GXBX^T \\
&\mbox{s.t.} & X \circ X = X,                      &\mbox{s.t.} & X \circ X = x_0X,\\
&&\|Xe-e\|^{2}=0,                                  &&\|Xe-x_0e\|^{2}=0, \\
&&\|X^Te-m\|^{2}=0,                                &&\|X^Te-x_0m\|^{2}=0, \\
&& X_{:i} \circ X_{:j} =0, ~\forall i \neq j,      && X_{:i} \circ X_{:j} =0, ~\forall i \neq j,\\
&& X^TX-M =0,                                      && X^TX-M =0,\\
&& \diag(XX^T)-e =0.                               && \diag(XX^T) - e =0,\\
&&                                                 && x_0^2 = 1.
\end{array}
\end{equation}
Here: $G = A -\Diag(d), ~ d\in \Rn$;
the first equality follows from the fifth equality in \eqref{eq:partmatrices},
and we add $x_0$ and the constraint $x_0^2=1$ to \emph{homogenize} the problem.
Note that if $x_0=-1$ at the optimum, then we can replace it with
$x_0=1$ by changing the sign $X\leftarrow -X$ while leaving the
objective value unchanged.
We next linearize the quadratic terms in \eqref{eq:qqp1sq} using the matrix
\[
Y_{X}:=\left (
\begin{array}{c}
  1\\
\kvec(X)
\end{array}
\right )
(1~~\kvec(X)^T).
\]
Then $Y_X \succeq 0$ and is rank one.
The objective function becomes
\index{$L_G$, objective}
\[
  \frac 12\tr GXBX^T = \frac 12\tr L_G Y_X,
\]
where
\beq
\label{eq:la1}
L_G := \left[ \begin{array}{cc}
0 & 0 \\ 0 & B \otimes G
\end{array} \right].
\end{equation}
By removing the rank one restriction on $Y_X$
and using a general symmetric matrix variable $Y$ rather than $Y_X$,
we obtain the following SDP relaxation and its properties:
\beq \label{eq:sdp1}
\begin{array}{rl}
  \cut(m) \geq p_{SDP}^*(G):=\min & \frac12\tr L_{G} Y \\
  \mbox{~s.t.~}
       & \arrow(Y)   = e_0,   \\
       & \tr D_{1} Y  = 0, \\
       & \tr D_{2} Y  = 0, \\
       & \GG_{J}(Y) = 0, \\
       & \DD_O(Y) = M, \\
       & \DD_e(Y) = e, \\
       & Y_{00} = 1, \\
       & Y \succeq 0,
\end{array}
\end{equation}
where the rows and columns of $Y\in \S^{kn+1}$ are indexed from $0$ to
$kn$. We now describe the constraints in detail.
\begin{enumerate}
\item
The {\em arrow linear transformation} acts on $\S^{kn+1}$,
\index{$\arrow$, arrow constraint}
\index{arrow constraint, $\arrow$}
\beq \label{eq:arrow}
\arrow (Y) := \diag (Y) - (0,Y_{0,1:kn})^T,
\end{equation}
$Y_{0,1:kn}$ is the vector formed from the last $kn$ components
of the first row (indexed by $0$) of $Y$.
The arrow constraint represents $X \in \ZZ$, and
$e_0$ is the first ($0$th) unit vector.
\index{$\GG_J$, gangster constraint}
\index{gangster constraint, $\GG_J$}
\item
The norm constraints for $X\in \EE$ are represented
by the constraints with the two $(kn+1) \times (kn+1)$ matrices
\[
D_1 :=
  \left[  \begin{array}{cc}
 n & -e_{k}^T \otimes e_{n}^T \\
 -e_{k} \otimes e_{n} & (e_{k}e_{k}^T) \otimes I_{n}
\end{array}  \right],
\]
\[
D_2 :=
  \left[  \begin{array}{cc}
 m^Tm & -m^T  \otimes e_{n}^T \\
 -m \otimes e_{n} & I_{k} \otimes (e_{n}e_{n}^T)
\end{array}  \right].
\]
\item
We let $\GG_J$ represent the gangster operator on ${\S}^{kn+1}$, i.e.,~it
shoots \emph{holes} in a matrix,
\beq
\label{eq:gangster}
({\mathcal G}_{J}(Y))_{ij}:=\left\{\begin{array}{ll}
                      Y_{ij} & \mbox{if}~ (i,j)~ \mbox{or}~(j,i)~\in J\\
                      0     & \mbox{otherwise,}
                    \end{array}
                    \right.
\end{equation}
\[
J:= \left \{(i,j): i=(p-1)n+q,~~j=(r-1)n+q,~~\mbox{for}~~\begin{array}{l}
              p<r,~ p,r \in\{1,\ldots,k\}\\
             q \in \{1,\ldots,n\}
                \end{array}  \right \}.
\]
The gangster constraint represents the (Hadamard) orthogonality of the
columns. The zeros are the diagonal elements of the off-diagonal blocks
$\bar Y_{(ij)}, 1<i<j,$ of
$Y$; see the block structure in \eqref{eq:blocked} below.
\item
Again, by abuse of notation, we use the symbols for the sets of constraints
$\DD_O,\DD_e$ to represent the linear transformations in the SDP
relaxation \eqref{eq:sdp1}. Note that
\[
   \begin{array}{rcl}
  \langle \Psi, X^TX \rangle = \trace I X \Psi X^T
                    = \kvec(X)^T (\Psi  \otimes I) \kvec(X).
\end{array}
\]
Therefore, the adjoint of $\DD_O$ is made up of a zero
row/column and $k^2$ blocks that are multiples of the identity:
\[
\DD_O^*(\Psi)=
\begin{bmatrix}
0 | & 0 \cr
\hline
0 |  &  \Psi  \otimes I_n
\end{bmatrix}.
\]
If $Y$ is blocked appropriately as
\beq
\label{eq:blocked}
Y=
\begin{bmatrix}
Y_{00} | & Y_{0,:} \cr
\hline
Y_{:,0} |  &  \bar Y
\end{bmatrix}, \quad
\bar Y = \begin{bmatrix}
  \bar Y_{(11)} & \bar Y_{(12)} & \cdots & \bar Y_{(1k)}\\
  \bar Y_{(21)} & \bar Y_{(22)} & \cdots & \bar Y_{(2k)}\\
  \vdots & \ddots & \ddots & \vdots\\
  \bar Y_{(k1)} & \ddots& \ddots & \bar Y_{(kk)}
\end{bmatrix},
\eeq
with each $\bar Y_{(ij)}$ being a $n\times n$ matrix,
then
\beq
\label{eq:constrDDO}
\DD_O(Y)= \left(\trace \bar Y_{(ij)}\right) \in \Sk.
\eeq

Similarly,
\[
  \langle \phi, \diag(XX^T) \rangle
  =\langle \Diag(\phi), XX^T \rangle
  =\kvec(X)^T\left(I_k \otimes \Diag(\phi)\right) \kvec(X).
\]
Therefore we get the sum of the diagonal parts
\beq
\label{eq:constrDDe}
\DD_e(Y)= \sum_{i=1}^k \diag \bar Y_{(ii)} \in \R^n.
\eeq
\end{enumerate}

\subsection{Final SDP Relaxation}

We present our final SDP relaxation \eqref{eq:SDPfinal}
in Theorem~\ref{thm:finalSDP} below and discuss some of its properties.
This relaxation is surprisingly simple/strong with many of the constraints
in \eqref{eq:sdp1} redundant. In particular, we show that
the problem is independent of the choice of $d\in \Rn$ in constructing $G$.
We also show that the two
constraints using $\DD_O,\DD_e$ are redundant in the SDP relaxation \eqref{eq:SDPfinal}. This answers affirmatively the question posed in \cite{WoZh:96}
on whether these constraints were redundant in the SDP relaxation for the GP.

Since both $D_1$ and $D_2$ are positive semidefinite and $\trace D_i Y=0,
i=1,2$, we conclude that the feasible set of \eqref{eq:sdp1}
has no strictly feasible (positive definite) points, $Y\succ 0$.
Numerical difficulties can arise when an
interior-point method is directly applied to a problem where strict feasibility,
Slater's condition, fails. Nonetheless, we can find a very simple
structured matrix in the relative interior of the feasible set to project (and regularize) the problem into a smaller dimension.
As in \cite{WoZh:96}, we achieve this by finding a matrix $V$ with range equal to the intersection
of the nullspaces of $D_1$ and $D_2$. This is called
\emph{facial reduction}, \cite{bw1,ScTuWonumeric:07}.
\index{facial reduction}
Let $ V_j \in \R^{j\times (j-1)}$, $ V_j^T e = 0$, e.g.,
\[
V_j:=\left[
\begin{array}{ccccc}
1 & 0& \ldots &\ldots &0\\
0 & 1&\ldots &\ldots &0\\
0 & 0 &1&\ldots&0 \\
\ldots & \ldots &\ldots& \ldots& 1\\
-1  & \ldots&\ldots & -1 &-1
\end{array}
\right]_{j \times (j-1)}.
\]
and let
\[
\widehat{V}:=
\left[
\begin{array}{cc}
1 & 0 \\
\frac{1}{n}{m} \otimes e_{n} & V_{k} \otimes V_{n}
\end{array}
\right].
\]
Then the range of $\widehat V$ is equal to the range of (any)
$ \widehat Y \in \relint F$, the relative interior of the minimal face.
And, we can facially reduce \eqref{eq:sdp1} using the substitution
\[
 Y= \widehat V Z \widehat V^T \in \S^{kn+1},  \quad Z \in \S^{(k-1)(n-1)+1}.
\]

The facially reduced SDP is then
\beq
\label{eq:facred1}
\begin{array}{rccl}
  \cut(m) \geq p_{SDP}^*(G)&=&
  \min & \frac12\tr \widehat{V}^{T} L_{G} \widehat{V}Z \\
  &&\text{s.t.}
       & \arrow(\widehat{V}Z\widehat{V}^{T})   = e_0   \\
       &&& \GG_{J}(\widehat{V}Z\widehat{V}^{T}) = 0 \\
       &&& (\widehat{V}Z\widehat{V}^{T})_{00} = 1 \\
       &&& \DD_O(\widehat{V}Z\widehat{V}^{T}) = M \\
       &&& \DD_e(\widehat{V}Z\widehat{V}^{T}) = e \\
       &&& Z \succeq 0, ~~ Z \in \S^{(k-1)(n-1)+1}.
\end{array}
\eeq
We let $ \bar J := J \cup {(0,0)}$.
\index{$\bar J := J \cup {(0,0)}$}
Our main, simplified, SDP relaxation is as follows.
\begin{theorem}
\label{thm:finalSDP}
The facially reduced SDP \eqref{eq:facred1} is equivalent to the single equality constrained problem
\begin{equation}\label{eq:SDPfinal}\tag{{SDP{$_{final}$}}}
\begin{array}{rccl}
  \cut(m) \geq p_{SDP}^*(G)&=&
  \min & \frac12\tr \left(\widehat{V}^{T} L_{G} \widehat{V}\right)Z \\
  && {\rm s.t.}
       & {\GG}_{\bar J}(\widehat{V}Z\widehat{V}^{T}) = \GG_{\bar J} (e_0e_0^T) \\
       &&& Z \succeq 0, ~~ Z \in \S^{(k-1)(n-1)+1}.
\end{array}
\end{equation}
The dual program is
\beq
\label{eq:SDPdualfinal}
\begin{array}{ll}
  \max & \frac12 W_{00} \\
  {\rm s.t.}
       & \widehat V^T \GG_{\bar J}(W) \widehat V \preceq
                  \widehat V^T L_G \widehat V
\end{array}
\eeq
Both primal and dual satisfy Slater's constraint qualification and the objective function is
independent of the $d\in \Rn$ chosen to form $G$.
\end{theorem}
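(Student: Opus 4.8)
The plan is to prove Theorem~\ref{thm:finalSDP} in four stages: (i) reduce the constraints of \eqref{eq:facred1} to the single gangster constraint in \eqref{eq:SDPfinal}; (ii) verify the $d$-independence of the objective; (iii) exhibit a Slater point for the primal \eqref{eq:SDPfinal}; (iv) exhibit a Slater point for the dual \eqref{eq:SDPdualfinal}. The main obstacle is stage~(i): showing that the arrow constraint, the $(0,0)$-entry normalization, and the two constraints $\DD_O(\widehat V Z\widehat V^T)=M$, $\DD_e(\widehat V Z\widehat V^T)=e$ are all implied by $\GG_{\bar J}(\widehat V Z\widehat V^T)=\GG_{\bar J}(e_0e_0^T)$ together with $Z\succeq 0$.

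For stage~(i), first I would observe that $\widehat V$ was constructed so that $\range(\widehat V)$ equals the intersection of the nullspaces of $D_1$ and $D_2$; consequently for any $Y=\widehat V Z\widehat V^T$ the norm constraints $\tr D_1Y=\tr D_2Y=0$ hold automatically, and moreover the row-sum/column-sum structure they encode ($Xe=x_0e$, $X^Te=x_0m$ in linearized form) is built into every feasible $Y$. Next, the gangster set $\bar J=J\cup\{(0,0)\}$ forces $Y_{00}=1$ directly, so the $(0,0)$ normalization is subsumed. The arrow constraint $\arrow(Y)=e_0$ says $Y_{ii}=Y_{0i}$ for $i=1,\dots,kn$ and $Y_{00}=0$ in the shifted sense — here I would argue that the diagonal entries of $\bar Y$ are pinned down by combining the gangster holes (which zero out the cross-block diagonal entries $(\bar Y_{(ij)})_{qq}$ for $i\neq j$) with the linear relations carried by $\range(\widehat V)$ and positive semidefiniteness; the $2\times 2$ principal minor argument on rows $0$ and $i$ shows $Y_{ii}=Y_{0i}^2/Y_{00}=Y_{0i}$ once one knows $Y_{0i}\in\{0, \text{value from row sums}\}$, which the facial structure supplies. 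Finally, for $\DD_O$ and $\DD_e$: I would show $\DD_O(Y)=(\tr\bar Y_{(ij)})$ is forced to equal $\Diag(m)$ because the off-diagonal blocks have zero trace (gangster plus the known pattern) and the diagonal blocks have trace $m_i$ from the row-sum constraints encoded in $\widehat V$; similarly $\sum_i\diag\bar Y_{(ii)}=e$ follows from the column-sum constraints. This is the delicate bookkeeping step and mirrors (and extends) the redundancy analysis of \cite{WoZh:96}.

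Stage~(ii) is quick: by Proposition~\ref{prop:partmatrix}, $\diag(XBX^T)=0$, which lifts to the statement that the diagonal blocks of $\bar Y$ are annihilated against $\Diag(d)$-type perturbations; concretely, $\tr\big((B\otimes\Diag(d))\bar Y\big)=\sum_i d_i(\DD_e(Y))_i\cdot(\text{stuff})$ — more precisely one checks $\tr L_{\Diag(d)}Y=\tr(B\otimes\Diag(d))\bar Y$ vanishes on every feasible $Y$ using the gangster zeros and the structure $\diag(\bar Y_{(ij)})=0$ for $i\neq j$ together with $\DD_e(Y)=e$ and $B$ having zero diagonal in its active block. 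Hence $\tr L_GY=\tr L_AY$ on the feasible set.

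For stages~(iii) and~(iv) I would write down explicit points. For the primal, take the ``barycenter'' lift: let $\bar X=\tfrac1n em^T$ (or better, average $Y_X$ over all partition matrices $X\in\Mcal_m$) and set $\widehat Y$ to be this average; standard arguments (as in \cite{WoZh:96,KaReWoZh:94}) show $\widehat Y\succ 0$ on $\range(\widehat V)$, i.e.\ the corresponding $Z=\widehat V^\dagger\widehat Y\widehat V^{\dagger T}\succ 0$, and it satisfies the gangster equation, giving primal Slater. For the dual \eqref{eq:SDPdualfinal}, I would choose $W$ so that $\GG_{\bar J}(W)$ has a large negative multiple of $e_0e_0^T$ plus a correction making $\widehat V^T L_G\widehat V-\widehat V^T\GG_{\bar J}(W)\widehat V\succ 0$; since $\widehat V^T\GG_{\bar J}(W)\widehat V$ can be made an arbitrarily negative-definite shift by driving $W_{00}\to-\infty$ while keeping the off-diagonal gangster entries at zero (so the only surviving term on $\range(\widehat V)$ is controlled), strict dual feasibility follows. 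Both Slater points then also re-confirm strong duality and hence justify writing $p^*_{SDP}(G)$ as either the min in \eqref{eq:SDPfinal} or the max in \eqref{eq:SDPdualfinal}.
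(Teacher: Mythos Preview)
Your overall four-stage plan matches the paper's, and stages~(ii) and~(iii) are essentially right. But stages~(i) and~(iv) each contain a genuine gap.

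\textbf{Arrow redundancy.} You list the correct ingredients (gangster zeros plus the linear relations carried by $\range(\widehat V)$), but the specific mechanism you propose --- a $2\times 2$ principal-minor argument giving $Y_{ii}=Y_{0i}^2/Y_{00}=Y_{0i}$ --- cannot work. The minor only yields the \emph{inequality} $Y_{ii}\ge Y_{0i}^2$, and for a general feasible $Y$ the entries $Y_{0i}$ are not $\{0,1\}$-valued; that holds only for rank-one lifts $Y_X$. The actual argument (from \cite{WoZh:96}) is purely linear: for each node $q$ the vector $w_q:=e_0-\sum_{p=1}^k e_{(p-1)n+q}$ lies in $\ker(\widehat V^T)$, because $\range(\widehat V)$ encodes $Xe=x_0e$. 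Hence $Yw_q=0$ for every $Y=\widehat VZ\widehat V^T$, and row $i=(p_0-1)n+q$ of this identity reads
\[
Y_{i0}=\sum_{p=1}^k Y_{i,(p-1)n+q}=Y_{ii}+\sum_{p\neq p_0}Y_{i,(p-1)n+q}=Y_{ii},
\]
the cross-terms being exactly the gangster zeros. With arrow in hand, your route to $\DD_O$ and $\DD_e$ then goes through: $\tr\bar Y_{(ii)}=\sum_jY_{jj}=\sum_jY_{j0}=(X^Te)_i=m_i$, using that the $0$-th column of $Y$ lies in $\range(\widehat V)$ and $Y_{00}=1$, so $X:=\Mat(Y_{1:kn,0})\in\EE$.

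\textbf{Dual Slater.} Keeping the off-diagonal gangster entries of $W$ at zero and sending $W_{00}\to-\infty$ gives $\widehat V^T\GG_{\bar J}(W)\widehat V=W_{00}\,e_0e_0^T$, a rank-one matrix --- not an ``arbitrarily negative-definite shift''. Subtracting it from $\widehat V^TL_G\widehat V$ only moves the $(0,0)$ entry; the lower $(k-1)(n-1)\times(k-1)(n-1)$ block remains $(V_k^TBV_k)\otimes(V_n^TGV_n)$. A direct computation gives $V_k^TBV_k=E_{k-1}-I_{k-1}$, with eigenvalues $k-2>0$ and $-1$, so this block is indefinite and strict feasibility fails with your choice. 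You must load the off-diagonal gangster positions as well: the paper takes
\[
\hat W=\begin{bmatrix}\alpha&0\\0&(E_k-I_k)\otimes I_n\end{bmatrix},
\]
which satisfies $\GG_{\bar J}(\hat W)=\hat W$, and then the lower block of $-\widehat V^T\hat W\widehat V$ becomes $(I_{k-1}+E_{k-1})\otimes(I_{n-1}+E_{n-1})\succ 0$, with the $(0,0)$ entry made positive by choosing $-\alpha$ large enough.
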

\begin{proof}
It is shown in \cite{WoZh:96} that the second and third constraint in
\eqref{eq:facred1} along with $Z\succeq 0$ implies that the $\arrow$
constraint holds, i.e. the $\arrow$ constraint is redundant.
It only remains to show that the last two equality constraints in
\eqref{eq:facred1} are redundant. First, the gangster constraint implies that
the blocks in $Y=\widehat{V}Z\widehat{V}^{T}$ satisfy
$\diag\bar Y_{(ij)} = 0$ for all $i\neq j$.
Next, notice that $D_i\succeq 0$, $i=1$, $2$. Moreover, using $Y\succeq 0$ and considering the Schur complement of $Y_{00}$, we have
\[
Y \succeq Y_{0:kn,0} Y_{0:kn,0}^T.
\]
Writing $v_1 := Y_{0:kn,0}$ and $X = {\rm Mat}(Y_{1:kn,0})$, we see further that
\[
0 = \trace(D_iY) \ge \trace(D_iv_1v_1^T) = \begin{cases}
  \|Xe - e\|^2 & {\rm if}\ i = 1,\\
  \|X^Te - m\|^2 & {\rm if}\ i = 2.
\end{cases}
\]
This together with the arrow constraints show that $\trace\bar Y_{(ii)} = \sum_{j=(i-1)n+1}^{ni}Y_{j0} = m_i$. Thus,
$\DD_O(\widehat{V}Z\widehat{V}^{T}) = M$ holds.
Similarly, one can see from the above and the arrow constraint that
$\DD_e(\widehat{V}Z\widehat{V}^{T}) = e$ holds.

The conclusion about Slater's constraint qualification for
\eqref{eq:SDPfinal} follows from \cite[Theorems~4.1]{WoZh:96},
which discussed the primal SDP relaxations of the GP. That relaxation
has the same feasible set as \eqref{eq:SDPfinal}.
In fact, it is shown in \cite{WoZh:96}
that
\[
\hat{Z} = \left[
\begin{array}{c|c}
1 & 0 \\
\hline\\
0  &
\frac{1}{n^{2}(n-1)}(n\Diag(\bar{m}_{k-1})-\bar{m}_{k-1}\bar{m}_{k-1}^T)
\otimes (nI_{n-1}-E_{n-1})
\end{array}
\right]
       \in \S_+^{(k-1)(n-1)+1},
\]
where $\bar{m}_{k-1}^T =(m_{1},\ldots, m_{k-1})$ and $E_{n-1}$ is the $n-1$ square matrix of ones,
is a strictly feasible point for \eqref{eq:SDPfinal}.
The right-hand side of the dual \eqref{eq:SDPdualfinal} differs from
the dual of the SDP relaxation of the GP.
However, let
\[
\hat{W}=  \left[
\begin{array}{cc}
\alpha & 0 \\
0  & (E_{k}-I_{k}) \otimes I_{n}
\end{array}
\right].
\]
From the proof of \cite[Theorems~4.2]{WoZh:96} we see that
$\GG_{\bar J}(\hat W) =\hat W$ and
\[ \begin{array}{ll}
-\widehat{V}^T \GG_{\bar J}(\hat{W})
\widehat{V}
&=
\widehat{V}^T (-\hat{W})
\widehat{V}
\\&=
      \left [ \begin{array}{cc}
                                        1 & m^T \otimes e^T/n \\
                                        0 & V_{k}^T \otimes V_{n}^T
                                              \end{array}
                                      \right ]
                   \left [ \begin{array}{cc}
                                        -\alpha & 0 \\
                                        0 & ((I_{k}-E_{k}) \otimes I_{n}
                                              \end{array}
                                      \right ]
                   \left [ \begin{array}{cc}
                                        1 & 0 \\
                   m \otimes e/n & V_{k} \otimes V_{n}
                                              \end{array}
                                      \right ] \\
     &= \left [ \begin{array}{cc}
      -\alpha + m^T(I_{k}-E_{k})m/n
      & (m^T(I_{k}-E_{k}) V_{k})  \otimes (e^T  V_{n})/n \\
      (V_{k}^T(I_{k}-E_{k}) m) \otimes (V_{n}^T e)/n
      & (V_{k}^T(I_{k}-E_{k}) V_{k}) \otimes (V_{n}^T  V_{n})
                \end{array}
        \right ] \\
     &= \left [ \begin{array}{cc}
      -\alpha+m^T(I_{k}-E_{k})m/n
      & 0  \\
      0
      & (I_{k-1}+E_{k-1}) \otimes (I_{n-1}+E_{n-1})
                \end{array}
        \right ]
  \\&  \succ 0,  \qquad \text{ for sufficiently large } -\alpha.
    \end{array}
\]
Therefore
${\widehat V}^T \GG_{\bar J}(\beta \hat W)\widehat V \prec \widehat V^T L_G \widehat V$
for sufficiently large $-\alpha, \beta$, i.e.,~Slater's
constraint qualification holds for the dual \eqref{eq:SDPdualfinal}.

Finally, we let $Y= \widehat{V}Z\widehat{V}^{T}$ with $Z$ feasible for
\eqref{eq:SDPfinal}. Then $Y$ satisfies the gangster constraints, i.e., $\diag\bar Y_{(ij)}  = 0$ for all $i\neq j$.
On the other hand, if we restrict $D=\Diag(d)$, then the objective matrix $L_D$ has
nonzero elements only in the same diagonal positions of the off-diagonal
blocks from the application of the Kronecker product $B \otimes
\Diag(d)$. Thus, we must have $\trace L_DY = 0$. Consequently, for all $d\in \Rn$,
\[
\trace \left(\widehat{V}^TL_G\widehat{V}\right)Z= \trace L_G\widehat{V}Z\widehat{V}^{T}=\trace L_G Y =
\trace L_A Y =
\trace \widehat{V}L_A\widehat{V}^{T}Z.
\]
\end{proof}
The above Theorem \ref{thm:finalSDP} also answers a question posed in
\cite{WoZh:96}, i.e.,~whether the two
constraints $\DD_O,\DD_e$ are redundant in the corresponding SDP relaxation of GP.
Surprisingly, the answer is yes, they are both redundant.

We next present
two useful properties for finding/recovering approximate solutions $X$ from a solution $Y$ of \eqref{eq:SDPfinal}.
\begin{prop}
\label{lem:linequals}
Suppose that $Y$ is feasible for \eqref{eq:SDPfinal}. Let $v_1=Y_{1:kn,0}$
and $\begin{pmatrix}
  v_0 & v_2^T
\end{pmatrix}^T$ denote a unit eigenvector of $Y$ corresponding to the largest eigenvalue. Then
$X_1:=\Mat(v_1)\in \EE\cap \NN$. Moreover, if $v_0\neq 0$, then
$X_2:=\Mat(\frac 1{v_0}v_2) \in \EE$. Furthermore, if, $Y \geq 0$, then
$v_0 \neq 0$ and $X_2\in \NN$.
\end{prop}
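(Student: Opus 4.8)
The plan is to unwind the definitions of the linear constraints appearing in \eqref{eq:SDPfinal} at the level of the block structure \eqref{eq:blocked}, using the two main consequences of feasibility: the gangster/arrow constraints (which are implied, as argued in the proof of Theorem~\ref{thm:finalSDP}), and the semidefiniteness $Y\succeq 0$ together with $Y_{00}=1$. First I would observe that since $Y=\widehat V Z\widehat V^T$ is feasible, $Y$ satisfies all the constraints of \eqref{eq:sdp1} (the arrow constraint, the $D_1,D_2$ constraints, the gangster constraint, and $\DD_O(Y)=M$, $\DD_e(Y)=e$), exactly as shown in the proof of Theorem~\ref{thm:finalSDP}. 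Writing $v_1=Y_{1:kn,0}$ and $X_1=\Mat(v_1)$, the Schur complement of $Y_{00}=1$ gives $Y\succeq v_1 v_1^T$, hence $0=\tr D_i Y\ge \tr D_i v_1v_1^T$ for $i=1,2$; since $D_1,D_2\succeq 0$ this forces $\tr D_1 v_1v_1^T=\|X_1e-e\|^2=0$ and $\tr D_2 v_1 v_1^T=\|X_1^Te-m\|^2=0$, i.e.\ $X_1\in\EE$. Nonnegativity $X_1\ge 0$ follows because the arrow constraint gives $(v_1)_j = Y_{jj}\ge 0$ for each $j\in\{1,\dots,kn\}$ (diagonal entries of a PSD matrix are nonnegative). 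This establishes $X_1\in\EE\cap\NN$.

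Next I would treat $X_2$. Let $\begin{pmatrix} v_0 & v_2^T\end{pmatrix}^T$ be a unit eigenvector for $\lambda_{\max}(Y)$, and suppose $v_0\ne 0$. The idea is that the linear equality constraints on $Y$ — being homogeneous of degree two in the ``lifted'' variable — are inherited by any rank-one summand in the spectral decomposition of $Y$, in particular by the leading eigenprojection. Concretely, $\lambda_{\max}(Y)\,\begin{pmatrix}v_0\\ v_2\end{pmatrix}\begin{pmatrix}v_0\\ v_2\end{pmatrix}^T \preceq Y$ up to the complementary spectral part, but more directly one uses that for the constraints $\tr D_i Y=0$ with $D_i\succeq 0$ and $Y\succeq 0$, every eigenvector of $Y$ with nonzero eigenvalue lies in $\ker D_i$; applying this to $\begin{pmatrix}v_0\\v_2\end{pmatrix}$ and rescaling by $1/v_0$ gives $\tr D_i \begin{pmatrix}1\\ \kvec(X_2)\end{pmatrix}\begin{pmatrix}1\\ \kvec(X_2)\end{pmatrix}^T=0$, i.e.\ $\|X_2e-e\|^2=0$ and $\|X_2^Te-m\|^2=0$, so $X_2\in\EE$. (One must also check the eigenvector has nonzero eigenvalue; this is automatic since $\lambda_{\max}(Y)\ge Y_{00}=1>0$.)

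For the last assertion, assume in addition $Y\ge 0$ entrywise. Then $v_1=Y_{1:kn,0}\ge 0$, and I would argue $v_0\ne 0$ by a Perron–Frobenius-type argument: a symmetric entrywise-nonnegative PSD matrix with a positive diagonal entry ($Y_{00}=1$) has a leading eigenvector that can be taken entrywise nonnegative, and if $v_0=0$ then the first row of $Y$ would have to be orthogonal (in the PSD sense) to this eigenvector, which combined with $Y_{00}=1>0$ and nonnegativity yields a contradiction; alternatively, one shows that the leading eigenvector can be chosen nonnegative and that $v_0=0$ would force $Y_{00}=0$. Once $v_0\ne 0$, nonnegativity of the eigenvector entries gives $\frac1{v_0}v_2\ge 0$ (choosing the sign of the eigenvector so that $v_0>0$), hence $X_2\in\NN$.

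The main obstacle is the last step — pinning down that $v_0\ne 0$ and that the leading eigenvector can be chosen with the same sign pattern as required for $X_2\ge 0$. This is where entrywise nonnegativity of $Y$ must be used carefully (the PSD and gangster constraints alone do not suffice, since the gangster pattern only zeroes out off-diagonal blocks' diagonals), and one has to invoke the right version of the Perron–Frobenius theorem for nonnegative symmetric matrices, being mindful that $Y$ need not be irreducible, so the nonnegative eigenvector exists but strict positivity of $v_0$ requires the extra structure coming from $Y_{00}=1$ and the arrow constraint linking $v_0^2$-type entries to the diagonal.
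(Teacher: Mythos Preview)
Your treatment of $X_1\in\EE\cap\NN$ and of $X_2\in\EE$ (assuming $v_0\neq 0$) is correct and essentially matches the paper. The kernel formulation you use---$\tr D_iY=0$ with $D_i\succeq 0$, $Y\succeq 0$ forces $\Range(Y)\subseteq\ker D_i$, so every eigenvector with nonzero eigenvalue is annihilated by $D_i$---is equivalent to the paper's ordering inequality $Y\succeq\lambda_1(Y)ww^T$ and is a clean way to phrase it.

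The gap is in the final step. You invoke Perron--Frobenius only in the weak form ``a nonnegative leading eigenvector exists,'' and then speak of ``choosing the sign so that $v_0>0$.'' But the eigenvector $(v_0,v_2^T)^T$ is \emph{given}, not chosen; if $\lambda_{\max}(Y)$ is not simple, the existence of some nonnegative eigenvector says nothing about the sign pattern of the particular one handed to you. Neither of your sketched alternatives closes this: from $v_0=0$ and $D_1$-kernel membership one gets only $\Mat(v_2)\,e=0$, and the observation that the first row of $Y$ is orthogonal to $(0,v_2^T)^T$ does not by itself contradict $Y_{00}=1$. You correctly flag irreducibility as the issue, but the precise statement you need is stronger than the one you wrote down.

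What the paper uses is that for a symmetric entrywise-nonnegative matrix the \emph{entrywise absolute value} $(|v_0|,|v_2|^T)^T$ of any top eigenvector is again a top eigenvector (argued blockwise on the irreducible components, where each Perron eigenspace is one-dimensional and the supports are disjoint). Applying your $\ker D_1$ argument to both the given eigenvector and its absolute value yields simultaneously
\[
\chi e = v_0 e \qquad\text{and}\qquad |\chi|\,e = |v_0|\,e,\qquad \chi:=\Mat(v_2).
\]
The second identity forces $v_0\neq 0$ (otherwise $|\chi|=0$ and the eigenvector is zero), and comparing row sums in the two identities gives $\sum_j\chi_{ij}=v_0=\pm|v_0|=\pm\sum_j|\chi_{ij}|$ for every $i$, whence $v_0 v_2\ge 0$ entrywise and $X_2\in\NN$.
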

\begin{proof}
The fact that $X_1\in \EE$ was shown in the proof of Theorem~\ref{thm:finalSDP}. That $X_1 \in \NN$ follows from the arrow constraint.
We now prove the results for $X_2$. Suppose first that $v_0\neq 0$. Then
\[
Y \succeq \lambda_1(Y)\begin{pmatrix}
  v_0\\ v_2
\end{pmatrix}\begin{pmatrix}
  v_0\\ v_2
\end{pmatrix}^T.
\]
Using this and the definitions of $D_i$ and $X_2$, we see further that
\begin{equation}\label{eq:XinEE}
0 = \trace(D_iY) \ge \begin{cases}
  \lambda_1(Y)v_0^2\|X_2e - e\|^2, & {\rm if}\ i = 1,\\
  \lambda_1(Y)v_0^2\|X_2^Te - m\|^2, & {\rm if}\ i = 2.
\end{cases}
\end{equation}
Since $\lambda_1(Y)\neq 0$ and $v_0\neq 0$, it follows that $X_2\in \EE$.

Finally, suppose that $Y\ge 0$. We claim that any eigenvector $\begin{pmatrix}
  v_0 & v_2^T
\end{pmatrix}^T$ corresponding to the largest eigenvalue must satisfy:
\begin{enumerate}
  \item $v_0\neq 0$;
  \item all entries have the same sign, i.e., $v_0v_2\ge 0$.
\end{enumerate}
From these claims, it would follow immediately that $X_2=\Mat(v_2/v_0)\in \NN$.

To prove these claims, we note first from the classical
Perron-Fr\"{o}benius theory, e.g.,~\cite{BrualRys:91}, that the vector $\begin{pmatrix}
  |v_0| & |v_2|^T
\end{pmatrix}^T$ is also an eigenvector corresponding to the largest eigenvalue.\footnote{Indeed, if $Y$ is irreducible,
the top eigenspace must be the span of a positive vector. Hence the conclusion follows. For a reducible $Y$,
the top eigenspace must then be a direct product of the top eigenspaces of each irreducible block. The conclusion follows similarly.}
Letting $\chi := \Mat(v_2)$ and proceeding as in \eqref{eq:XinEE}, we conclude that
\[
\|\chi e - v_0 e\|^2 = 0 \ \ {\rm and} \ \ \||\chi|e - |v_0|e\|^2 = 0.
\]
The second equality implies that $v_0\neq 0$. If $v_0>0$, then for all $i=1,\cdots,n$,
we have
\[
\sum_{j=1}^k \chi_{ij} = v_0 = \sum_{j=1}^k|\chi_{ij}|,
\]
showing that $\chi_{ij}\ge 0$ for all $i$, $j$, i.e., $v_2\ge 0$. If $v_0<0$, one can show similarly that $v_2\le 0$. Hence, we have also shown $v_0v_2\ge 0$.
This completes the proof.
\end{proof}


\section{Feasible Solutions and Upper Bounds}
\label{sect:feassoln}

In the above we have presented several approaches for finding lower
bounds for $\cut(m)$. In addition, we have found
matrices $X$ that approximate the bound and satisfy some of the graph
partitioning constraints. Specifically, we obtain two approximate
solutions $X_A,X_L \in \EE$ in \eqref{eq:XattainAL}, an approximate solution to \eqref{eq:qplb2} which
can be transformed into an $n\times k$ matrix via \eqref{eq:XZrelate2}, and the $X_1$, $X_2$ described in
Proposition~\ref{lem:linequals}.
We now use these to obtain feasible solutions (partition matrices)
and thus obtain upper bounds.

We show below that we can find the closest feasible partition
matrix $X$ to a given approximate matrix $\bar X$ using linear programming, where $\bar X$ is
found, for example, using the projected eigenvalue, QP or SDP lower bounds.
Note that \eqref{eq:upperbdLP} is a \emph{transportation problem} and
therefore the optimal $X$ in \eqref{eq:upperbdLP} can be found in
strongly polynomial time $(O(n^2))$, see
e.g.,~\cite{MR861043,MR1159330}.

\begin{theorem}
Let $\bar X \in \EE$ be given. Then the closest partition matrix $X$ to
$\bar X$ in Fr\"{o}benius norm can be found by using the simplex method to
solve the linear program
\begin{equation}\label{eq:upperbdLP}
\begin{array}{ccc}
\min & -\trace \bar X^T X  \\
{\rm s.t.}   &  Xe = e,  \\
      &  X^T e = m,  \\
      & X \geq 0.
\end{array}
\end{equation}
\end{theorem}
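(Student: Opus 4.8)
The plan is to show that minimizing the Fr\"obenius distance $\|X-\bar X\|_F^2$ over partition matrices is equivalent to the linear program \eqref{eq:upperbdLP}, and then invoke the integrality of the transportation polytope. First I would expand
\[
\|X-\bar X\|_F^2 = \trace X^TX - 2\trace \bar X^TX + \trace \bar X^T\bar X.
\]
On the feasible set $\Mcal_m$ (equivalently $X\in\ZZ\cap\EE$), every entry $X_{ij}$ is $0$ or $1$, so $X_{ij}^2=X_{ij}$ and hence $\trace X^TX = \sum_{ij}X_{ij}^2 = \sum_{ij}X_{ij} = e^TXe = n$, a constant; the term $\trace\bar X^T\bar X$ is also constant in $X$. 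Therefore minimizing $\|X-\bar X\|_F^2$ over $\Mcal_m$ is the same as minimizing $-\trace\bar X^TX$ over $\Mcal_m$, which is the objective in \eqref{eq:upperbdLP}.

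Next I would replace the nonconvex constraint $X\in\Mcal_m$ by its convex hull. By Proposition~\ref{prop:partmatrices}, $\Mcal_m=\ext(\DD)$, where $\DD=\EE\cap\NN$ is exactly the feasible set $\{X:Xe=e,\ X^Te=m,\ X\ge 0\}$ of the linear program \eqref{eq:upperbdLP}; moreover $\DD=\conv(\Mcal_m)$, since $\DD$ is a transportation polytope whose vertices are precisely the $0$–$1$ matrices in $\Mcal_m$ (the same Birkhoff–von Neumann-type argument already used for Proposition~\ref{prop:partmatrices} and for Lemma~\ref{lem:expl}). Since a linear function attains its minimum over a polytope at a vertex, the minimum of $-\trace\bar X^TX$ over $\DD$ equals its minimum over $\ext(\DD)=\Mcal_m$, and the simplex method applied to \eqref{eq:upperbdLP} returns such a vertex, i.e.\ an actual partition matrix. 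Combining this with the first paragraph shows that the optimal $X$ of \eqref{eq:upperbdLP} is a closest partition matrix to $\bar X$ in Fr\"obenius norm.

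The only genuinely delicate point is the reduction in the first paragraph: it relies on $\trace X^TX$ being constant over the feasible set, which is true over $\Mcal_m$ but \emph{not} over the relaxation $\DD$. So the argument must be ordered carefully — first rewrite the objective on the discrete set $\Mcal_m$, and only \emph{afterwards} pass to the convex hull, exploiting the linearity of the resulting objective $-\trace\bar X^TX$ and vertex optimality. Dropping the constant $\trace\bar X^T\bar X$ also uses $\bar X\in\EE$ only insofar as $\bar X$ is a fixed given matrix; no property of $\bar X$ beyond being given is needed. Finally, the assertion that \eqref{eq:upperbdLP} is a transportation problem solvable in strongly polynomial $O(n^2)$ time (with the simplex method producing an integral vertex) is exactly the classical result cited, and requires only that $m$ is integral with $m^Te=n$, which holds by hypothesis.
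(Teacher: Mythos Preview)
Your proposal is correct and follows essentially the same approach as the paper: expand the Fr\"obenius norm, use $\trace X^TX = n$ on $\Mcal_m$ to reduce to the linear objective $-\trace \bar X^TX$, and then invoke $\Mcal_m = \ext(\DD)$ from Proposition~\ref{prop:partmatrices} so that simplex returns a partition matrix. Your additional remarks on the ordering of the argument and on the role of $\bar X\in\EE$ are accurate but go beyond what the paper records.
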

\begin{proof}
Observe that for any partition matrix $X$, $\trace X^TX = n$. Hence, we have
\[
\min_{X\in \Mcal_m}\|\bar X - X\|^2_F = \trace(\bar X^T\bar X)+n
           +2 \min_{X\in \Mcal_m}\trace \left(-\bar X^T X\right).
\]
The result now follows from this and the fact that $\Mcal_m = {\rm ext}(\DD)$, as stated in \eqref{eq:partmatrices}. (This is similar to what is done in
\cite{KaReWoZh:94}.)
\end{proof}

\section{Numerical Tests}
\label{sect:numertests}

In this section, we provide empirical comparisons for
 the lower and upper bounds presented above. All the numerical tests are
performed in MATLAB version R2012a on a \emph{single} node of the
\href{http://www.math.uwaterloo.ca/~rblander/cops/}
{\emph{COPS}} cluster 
at University of Waterloo.
It is an SGI XE340 system, with two 2.4 GHz quad-core Intel E5620 Xeon 64-bit CPUs and 48 GB RAM,
equipped with SUSE Linux Enterprise server 11 SP1.

\subsection{Random Tests with Various Sizes}\label{sect:numertestssmall}

In this subsection, we compare the bounds on two kinds of randomly generated graphs of various sizes:
\begin{enumerate}
  \item Structured graphs: These are formed by first generating
$k$ disjoint cliques (of sizes $m_1,\ldots,m_k$, randomly chosen from $\{2,...,{\sf imax}+1\}$).
 We join the first $k-1$ cliques to every node of the $k$th clique.
We then add $u_0$ edges between the
first $k-1$ cliques, chosen uniformly at random from the complement graph.
In our tests, we set $u_0 = \lfloor e_cp\rfloor$, where $e_c$ is the number of
edges in the complement graph and $0\le p<1$. By construction, $u_0 \ge {\rm cut}(m)$.
  \item Random graphs: We start by fixing positive integers  $k,{\sf imax}$ and
generating integers $m_1,\ldots,m_k$, each chosen randomly from $\{2,...,{\sf imax}+1\}$.
    We generate a graph with $n = e^Tm$ nodes.
   The incidence matrix is generated with the MATLAB command:
   \begin{verbatim}
     A = round(rand(n)); A = round((A + A')/2); A = A - diag(diag(A));\end{verbatim}
Consequently, an edge is chosen with probability $0.75$.
\end{enumerate}

First, we note the following about the eigenvalue bounds.
Figures~\ref{fig:neggamma} and \ref{fig:posgamma}
\begin{figure}[h!]
\centering
    \caption{Negative value for optimal $\gamma$}
    \label{fig:neggamma}
    \includegraphics[scale = 0.5]{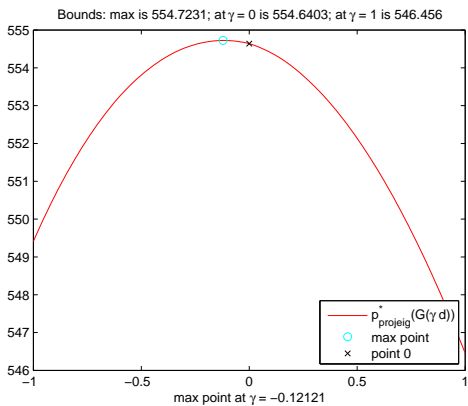}
\end{figure}
\begin{figure}[h!]
\centering
    \caption{Positive value for optimal $\gamma$}
    \label{fig:posgamma}
    \includegraphics[scale = 0.5]{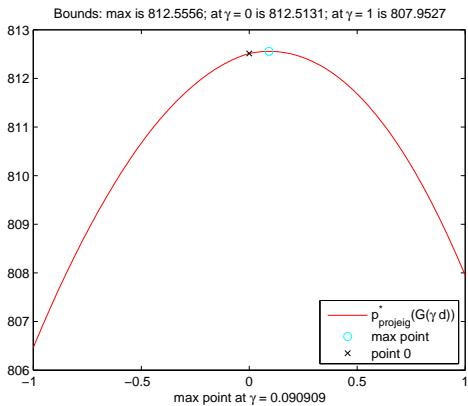}
\end{figure}
show the difference in the projected eigenvalue
bounds from using $A-\gamma \Diag(d)$ for a random $d\in \Rn$ on two structured graphs.
This is typical of what we saw in our tests, i.e. that the maximum bound
is near $\gamma=0$. We had similar results for the specific choice $d=Ae$.
This empirically suggests that using $A$ would yield a better projected eigenvalue lower bound.
This phenomenon will also be observed in subsequent tests.

In Tables~\ref{tables1} and \ref{tables2}, we consider small instances where
$k = 4$, $5$, $p = 20\%$ and ${\sf imax} = 10$. We consider the
projected eigenvalue bounds with $G = -L$ (${\rm eig}_{-L}$) and
$G = A$ (${\rm eig}_{A}$), the QP bound with $G = A$,
the SDP bound and the doubly nonnegative programming (DNN) bound.\footnote{The doubly nonnegative programming relaxation is obtained by imposing the constraint $\widehat V Z \widehat V^T\ge 0$ onto \eqref{eq:SDPfinal}. Like the SDP relaxation, the bound obtained from this approach is independent of $d$. In
our implementation, we picked $G = A$ for both the SDP and the DNN bounds.} For each approach, we present the lower bounds (rounded up to the nearest integer) and the corresponding upper bounds (rounded down to the nearest integer) obtained via the technique described in Section~\ref{sect:feassoln}.\footnote{The SDP and DNN problems are solved via SDPT3 (version 4.0),
\cite{MR1976479}, with tolerance {\sf gaptol} set to be $1e{-6}$ and
$1e{-3}$ respectively. The problems \eqref{eq:LP} and \eqref{eq:qplb2}
are solved via SDPT3 (version 4.0) called by CVX (version 1.22),
\cite{cvx}, using the default settings. The problem \eqref{eq:upperbdLP} is solved using simplex method in MATLAB, again using the default settings.
} We also present the relative gap (Rel. gap), defined as
\begin{equation}\label{eq:relgap}
  \mbox{Rel. gap} = \frac{\mbox{best upper bound} - \mbox{best lower bound}}{\mbox{best upper bound} + \mbox{best lower bound}}.
\end{equation}
In terms of lower bounds, the DNN approach usually gives the best lower bounds.
While the SDP approach gives better lower bounds than the QP
approach for random graphs, they are comparable for structured graphs.
Moreover, the projected eigenvalue lower bounds with $A$ always
outperforms the ones with $-L$.  On the other hand,
the DNN approach usually gives the best upper bounds.

\begin{table}[h]
\small
\begin{center}
\begin{tabular}{|r|r|r|r||r|r|r|r|r||r|r|r|r|r||r|}  \hline
\multicolumn{4}{|c||}{Data} & \multicolumn{5}{c||}{Lower bounds} & \multicolumn{5}{c||}{Upper bounds} & \multicolumn{1}{c|}{Rel. gap}
\\ 
$n$ & $k$ & $|E|$ & $u_0$ & ${\rm eig}_{-L}$ & ${\rm eig}_{A}$ & QP & SDP & DNN & ${\rm eig}_{-L}$ & ${\rm eig}_{A}$ & QP & SDP & DNN &
\\ \hline
 31 & 4 & 362 & 25 & 21 & 22 & 24 & 23 & 25 & 68  & 102 & 25  & 36  & 25  & 0.0000 \\
 18 & 4 & 86  & 16 & 13 & 14 & 15 & 16 & 16 & 22  & 35  & 16  & 19  & 16  & 0.0000 \\
 29 & 5 & 229 & 44 & 32 & 37 & 40 & 39 & 44 & 76  & 74  & 44  & 53  & 44  & 0.0000 \\
 41 & 5 & 453 & 91 & 76 & 84 & 86 & 86 & 91 & 159 & 162 & 101 & 125 & 102 & 0.0521 \\
\hline
\end{tabular}
\end{center}
\caption{Results for small structured graphs}\label{tables1} \normalsize
\end{table}

\begin{table}[h]
\small
\begin{center}
\begin{tabular}{|r|r|r||r|r|r|r|r||r|r|r|r|r||r|}  \hline
\multicolumn{3}{|c||}{Data} & \multicolumn{5}{c||}{Lower bounds} & \multicolumn{5}{c||}{Upper bounds} & \multicolumn{1}{c|}{Rel. gap}
\\ 
$n$ & $k$ & $|E|$ & ${\rm eig}_{-L}$ & ${\rm eig}_{A}$ & QP & SDP & DNN & ${\rm eig}_{-L}$ & ${\rm eig}_{A}$ & QP & SDP & DNN &
\\ \hline
 25 & 4 & 231 & 53  & 59  & 64  & 67  & 71  & 80  & 79  & 74  & 75  & 72  & 0.0070  \\
 23 & 4 & 189 & 7   & 9   & 12  & 14  & 18  & 25  & 24  & 22  & 22  & 20  & 0.0526  \\
 32 & 5 & 379 & 101 & 112 & 119 & 123 & 134 & 152 & 151 & 141 & 141 & 137 & 0.0111  \\
 28 & 5 & 266 & 77  & 89  & 95  & 100 & 106 & 124 & 132 & 111 & 115 & 112 & 0.0230  \\
\hline
\end{tabular}
\end{center}
\caption{Results for small random graphs}\label{tables2} \normalsize
\end{table}

We consider medium-sized instances in Tables~\ref{table1} and \ref{table2},
where $k = 8$, $10$, $12$, $p = 20\%$ and ${\sf imax} = 20$.
We do not consider DNN bounds due to computational complexity.
We see that the lower bounds always satisfy
${\rm eig}_{-L} \leq {\rm eig}_{A} \leq {\rm QP}$. In particular,
we note that the
(lower) projected eigenvalue bounds with $A$ always outperform the ones
with $-L$. However, what is surprising is that the lower
projected eigenvalue bound
with $A$ (for structured graphs) sometimes outperforms the SDP lower
bound. This illustrates the strength of the heuristic that replaces the
quadratic objective function with the sum of a quadratic and linear term
and then solves the linear part exactly over the partition matrices.

\begin{table}[h!]
\small
\begin{center}
\begin{tabular}{|r|r|r|r||r|r|r|r||r|r|r|r||r|}  \hline
\multicolumn{4}{|c||}{Data} & \multicolumn{4}{c||}{Lower bounds} & \multicolumn{4}{c||}{Upper bounds} & \multicolumn{1}{c|}{Rel. gap}
\\ 
$n$ & $k$ & $|E|$ & $u_0$ & ${\rm eig}_{-L}$ & ${\rm eig}_{A}$ & QP & SDP & ${\rm eig}_{-L}$ & ${\rm eig}_{A}$ & QP & SDP &
\\ \hline
 69  & 8  & 1077 & 317  & 249  & 283  & 290  & 281  & 516  & 635  & 328  & 438  & 0.0615    \\
 114 & 8  & 3104 & 834  & 723  & 785  & 794  & 758  & 1475 & 1813 & 834  & 1099 & 0.0246   \\
 85  & 8  & 2164 & 351  & 262  & 319  & 327  & 320  & 809  & 384  & 367  & 446  & 0.0576    \\
 116 & 10 & 3511 & 789  & 659  & 725  & 737  & 690  & 1269 & 2035 & 796  & 1135 & 0.0385    \\
 104 & 10 & 2934 & 605  & 500  & 546  & 554  & 529  & 1028 & 646  & 631  & 836  & 0.0650   \\
 78  & 10 & 1179 & 455  & 358  & 402  & 413  & 389  & 708  & 625  & 494  & 634  & 0.0893    \\
 129 & 12 & 3928 & 1082 & 879  & 988  & 1001 & 965  & 1994 & 1229 & 1233 & 1440 & 0.1022    \\
 120 & 12 & 3102 & 1009 & 833  & 913  & 926  & 893  & 1627 & 1278 & 1084 & 1379 & 0.0786    \\
 126 & 12 & 2654 & 1305 & 1049 & 1195 & 1218 & 1186 & 1767 & 1617 & 1361 & 1736 & 0.0554    \\
\hline
\end{tabular}
\end{center}
\caption{Results for medium-sized structured graphs}\label{table1} \normalsize
\end{table}

\begin{table}[h!]
\small
\begin{center}
\begin{tabular}{|r|r|r||r|r|r|r||r|r|r|r||r|}  \hline
\multicolumn{3}{|c||}{Data} & \multicolumn{4}{c||}{Lower bounds} & \multicolumn{4}{c||}{Upper bounds} & \multicolumn{1}{c|}{Rel. gap}
\\ 
$n$ & $k$ & $|E|$ & ${\rm eig}_{-L}$ & ${\rm eig}_{A}$ & QP & SDP & ${\rm eig}_{-L}$ & ${\rm eig}_{A}$ & QP & SDP &
\\ \hline
 96  & 8  & 3405  & 1982 & 2103 & 2126 & 2146 & 2357 & 2353 & 2354 & 2368 & 0.0460  \\
 96  & 8  & 3403  & 2264 & 2420 & 2439 & 2451 & 2668 & 2652 & 2658 & 2696 & 0.0394  \\
 94  & 8  & 3292  & 1795 & 1885 & 1910 & 1930 & 2128 & 2141 & 2092 & 2130 & 0.0403  \\
 90  & 10 & 3009  & 1533 & 1622 & 1649 & 1659 & 1867 & 1886 & 1850 & 1873 & 0.0544  \\
 114 & 10 & 4823  & 2218 & 2394 & 2443 & 2459 & 2759 & 2780 & 2725 & 2777 & 0.0513  \\
 110 & 10 & 4542  & 3021 & 3160 & 3185 & 3201 & 3487 & 3491 & 3484 & 3492 & 0.0423  \\
 168 & 12 & 10502 & 7523 & 7860 & 7894 & 7912 & 8509 & 8504 & 8494 & 8594 & 0.0355  \\
 126 & 12 & 5930  & 4052 & 4292 & 4318 & 4330 & 4706 & 4687 & 4672 & 4735 & 0.0380  \\
 134 & 12 & 6616  & 4402 & 4523 & 4557 & 4577 & 4955 & 5004 & 4963 & 5011 & 0.0397  \\
\hline
\end{tabular}
\end{center}
\caption{Results for medium-sized random graphs}\label{table2} \normalsize
\end{table}

In Tables~\ref{tablem1} and \ref{tablem2}, we consider larger
instances with $k = 35$, $45$, $55$, $p = 20\%$ and ${\sf imax} = 100$.
We do not consider SDP and DNN bounds due to computational complexity. We see again that the projected eigenvalue lower bounds with $A$ always
outperforms the ones with $-L$.

\begin{table}[h!]
\small
\begin{center}
\begin{tabular}{|r|r|r|r||r|r||r|r||r|}  \hline
\multicolumn{4}{|c||}{Data} & \multicolumn{2}{c||}{Lower bounds} & \multicolumn{2}{c||}{Upper bounds} & \multicolumn{1}{c|}{Rel. gap}
\\ 
$n$ & $k$ & $|E|$ & $u_0$ & ${\rm eig}_{-L}$ & ${\rm eig}_{A}$ & ${\rm eig}_{-L}$ & ${\rm eig}_{A}$  &
\\ \hline
 2012 & 35 & 575078  & 361996 & 345251 & 356064 & 442567 & 377016 & 0.0286\\
 1545 & 35 & 351238  & 210375 & 193295 & 205921 & 258085 & 219868 & 0.0328\\
 1840 & 35 & 439852  & 313006 & 295171 & 307139 & 371207 & 375468 & 0.0944\\
 1960 & 45 & 532464  & 346838 & 323526 & 339707 & 402685 & 355098 & 0.0222\\
 2059 & 45 & 543331  & 393845 & 369313 & 386154 & 469219 & 483654 & 0.0971\\
 2175 & 45 & 684405  & 419955 & 396363 & 412225 & 541037 & 581416 & 0.1351\\
 2658 & 55 & 924962  & 651547 & 614044 & 638827 & 780106 & 665760 & 0.0206\\
 2784 & 55 & 1063828 & 702526 & 664269 & 690186 & 853750 & 922492 & 0.1059\\
 2569 & 55 & 799319  & 624819 & 586527 & 612605 & 721033 & 713355 & 0.0760\\
\hline
\end{tabular}
\end{center}
\caption{Results for larger structured graphs}\label{tablem1} \normalsize
\end{table}

\begin{table}[h!]
\small
\begin{center}
\begin{tabular}{|r|r|r||r|r||r|r||r|}  \hline
\multicolumn{3}{|c||}{Data} & \multicolumn{2}{c||}{Lower bounds} & \multicolumn{2}{c||}{Upper bounds} & \multicolumn{1}{c|}{Rel. gap}
\\ 
$n$ & $k$ & $|E|$ & ${\rm eig}_{-L}$ & ${\rm eig}_{A}$ & ${\rm eig}_{-L}$ & ${\rm eig}_{A}$ &
\\ \hline
 1608 & 35 & 969450  & 837200  & 851686  & 875955  & 875521  & 0.0138 \\
 1827 & 35 & 1250683 & 1066083 & 1083048 & 1112377 & 1112523 & 0.0134 \\
 1759 & 35 & 1159454 & 1032413 & 1048350 & 1075600 & 1074945 & 0.0125 \\
 2250 & 45 & 1897480 & 1669309 & 1694456 & 1735583 & 1734965 & 0.0118 \\
 2287 & 45 & 1959760 & 1808192 & 1838114 & 1879230 & 1877722 & 0.0107 \\
 2594 & 45 & 2522071 & 2183560 & 2212241 & 2263249 & 2264242 & 0.0114 \\
 2660 & 55 & 2651856 & 2481928 & 2516160 & 2568521 & 2566434 & 0.0099 \\
 2715 & 55 & 2763486 & 2503729 & 2535541 & 2589999 & 2589202 & 0.0105 \\
 2661 & 55 & 2652743 & 2413321 & 2442960 & 2495530 & 2495115 & 0.0106 \\
\hline
\end{tabular}
\end{center}
\caption{Results for larger random graphs}\label{tablem2} \normalsize
\end{table}

We now briefly comment on the computational time (measured by MATLAB tic-toc function) for the above tests. For lower bounds,
the eigenvalue bounds are fastest to compute. Computational time for small, medium and larger problems are usually
less than 0.01 seconds, 0.1 seconds and 0.5 minutes, respectively. The QP bounds are more expensive to compute, taking around $0.5$ to $2$ seconds
for small instances and $0.5$ to $15$ minutes for medium-sized instances.
The SDP bounds are even more expensive to compute, taking $0.5$ to $3$ seconds for small instances
and $2$ minutes to $2$ hours for medium-sized instances. The DNN bounds are the most expensive to compute. Even for small instances,
it can take $20$ seconds to $40$ minutes to compute a bound. For upper bounds, using the MATLAB simplex method,
the time for solving \eqref{eq:upperbdLP} is usually less than 1 second for small and
medium-sized problems; while for the larger problems in Tables~\ref{tablem1} and \ref{tablem2}, it takes $1$ to $5$ minutes.

\paragraph{Finding a Vertex Separator.} Before ending this subsection, we comment on how the above bounds can possibly be used in finding vertex separators when $m$ is not explicitly known beforehand. Since there can be at most $\binom{n-1}{k-1}$ $k$-tuples of integers summing up to $n$, theoretically, one can consider all possible such $m$ and estimate the corresponding $\cut(m)$ with the bounds above.

As an illustration, we consider a concrete instance of a structured graph, generated with $n = 600$, $m_1=m_2=m_3=200$ and $p = 0$. Thus, we have $k=3$, and, by construction, $\cut(m) = 0$.

Suppose that the correct size vector $m$ is not known in advance. Therefore we
now consider a range of estimated vectors $m'$. In Table~\ref{tablenew},
we consider sizes $m'_1$ and $m'_2$ with values taken between $180$ to $220$,
with $m'_3=600-m'_1-m'_2$. We report on the eigenvalue bounds,
the QP bounds and
the SDP bounds for each $m'$. Observe that the SDP lower bounds are usually the largest while the QP upper bounds are usually the smallest. The existence of a vertex separator when $m_1=m_2=m_3=200$ is identified by the QP and SDP bounds.\footnote{The QP lower bound of $1$ in this case actually corresponds to an objective value in the order of $1e-5$. We obtain the $1$ since we always truncate the lower bound to the smallest integer exceeding it.} Furthermore, the QP upper bound being zero for the cases $(m'_1,m'_2)=(180,180)$, $(180,200)$ or $(200,180)$ also indicates the existence of a vertex separator.

\begin{table}[h!]
\small
\begin{center}
\begin{tabular}{|r|r||r|r|r|r||r|r|r|r|}  \hline
\multicolumn{2}{|c||}{Data} & \multicolumn{4}{c||}{Lower bounds} & \multicolumn{4}{c|}{Upper bounds}
\\ 
$m'_1$ & $m'_2$ & ${\rm eig}_{-L}$ & ${\rm eig}_{A}$ & QP & SDP & ${\rm eig}_{-L}$ & ${\rm eig}_{A}$ & QP & SDP \\ \hline
 180 & 180 & -3600 & -2400 & -2400 & -1800 & 2520 & 32400 &    0 &  540 \\
 180 & 200 & -1922 & -1281 & -1270 & -949 & 2538 & 36000 &    0 & 3240 \\
 180 & 220 &  -99 &  -66 &  -16 &   0 & 3600 & 39600 & 3600 & 4312 \\
 200 & 180 & -1922 & -1281 & -1270 & -949 & 2538 & 36000 &    0 & 1440 \\
 200 & 200 &   0 &   0 &    1 &   0 & 2200 & 39801 &    0 &    0 \\
 200 & 220 & 2074 & 2716 & 2759 & 4000 & 4000 & 40000 & 4398 & 11832 \\
 220 & 180 &  -99 &  -66 &  -16 &   0 & 3600 & 39600 & 3958 & 19768 \\
 220 & 200 & 2074 & 2716 & 2759 & 4000 & 4000 & 40000 & 11518 & 11200 \\
 220 & 220 & 4400 & 5867 & 5867 & 8400 & 8400 & 40241 & 8400 & 12916 \\
\hline
\end{tabular}
\end{center}
\caption{Results for medium-sized graph without an explicitly known $m$}\label{tablenew} \normalsize
\end{table}

\subsection{Large Sparse Projected Eigenvalue Bounds}\label{sect:numertestslarge}

We assume that $n \gg k$.  The projected eigenvalue bound in Theorem
\ref{thm:projeig} in \eqref{eq:Aprojeigbnd}
is composed of a constant term, a minimal scalar product of
$k-1$ eigenvalues and a linear term. The constant term and linear term are
trivial to evaluate and essentially take no CPU time. The evaluation of
the $k-1$ eigenvalues of $\widehat B$ is also efficient and accurate as
the matrix is small and symmetric.  The only significant cost is
the evaluation of the largest $k-2$ eigenvalues and the smallest eigenvalue of
$\widehat G$. In our test below, we use $G=A$ for simplicity. This choice is
also justified by our numerical results in the previous subsection and the observation from
Figures~\ref{fig:neggamma} and \ref{fig:posgamma}.

We use the MATLAB
\emph{eigs} command for the $k-1$ eigenvalues of $V^TAV$ for the lower bound.
Since the corresponding \eqref{eq:upperbdLP} has much larger
dimension than we considered in the previous subsection,
we turn to
\href{http://www-01.ibm.com/software/commerce/optimization/cplex-optimizer/}{IBM ILOG CPLEX}
version 12.4 (MATLAB interface)
with default settings to solve for the upper bound.
We use the MATLAB tic-toc function to time the routine for finding the
lower bound, and report {\sf output.time} from
the function {\sf cplexlp.m} as the cputime for finding the upper bound.

We use two different choices $V_0$ and $V_1$
for the matrix $V$ in \eqref{eq:defPQ}.
\begin{enumerate}
\item
We choose the following matrix $V_0$ with mutually orthogonal columns
that satisfies  $V_0^Te=0$.\footnote{Choosing a sparse $V$ in the
orthogonal matrix in \eqref{eq:defPQ} would speed up the calculation
of the eigenvalues. Choosing a sparse $V$ would be easier if $V$ did not
require orthonormal columns but just linearly independent columns,
i.e.,~if we could arrange for a parametrization as in Lemma
\ref{lem:paramE} without $P$ orthogonal.}
\[
V_0=
\begin{bmatrix}
1 & 1 & 1 & \ldots & 1   \cr
-1 & 1 & 1 & \ldots & 1   \cr
0 & -2 & 1 & \ldots & 1   \cr
0 & 0 & -3 & \ldots & 1   \cr
\dots & \dots & \dots & \dots & \cr
0 & 0 &  0 & \ldots &  -(n-1)   \cr
\end{bmatrix}
\]
Let $s=\begin{pmatrix}  \|V_0(:,i)\| \end{pmatrix} \in \R^{n-1}$.
Then the operation needed for the MATLAB large sparse
eigenvalue function \emph{eigs} is  ($*$ denotes multiplication
and $\cdot '$ denotes transpose, $./$ denotes
elementwise division)
\beq
\label{eq:Vbang}
\widehat A*v = V'*(A*(V * v)) =
        V'_0*(A * (V_0 * (v./s) ))  ./s.
\eeq
Thus we never form the matrix $\widehat A$ and we preserve the
structure of $V_0$ and sparsity of $A$
when doing the matrix-vector multiplications.

\item
An alternative approach uses
\[
   V_1 =
\begin{bmatrix}
\begin{bmatrix}
\begin{bmatrix}
I_{\left \lfloor \frac n2 \right \rfloor} \otimes
                                      \frac 1{\sqrt 2} \begin{bmatrix}
                                      1\cr -1
                                      \end{bmatrix}
\end{bmatrix}
\cr 0_{(n-2\left \lfloor \frac n2 \right \rfloor),
          \left \lfloor \frac n2 \right \rfloor}
\end{bmatrix}
\begin{bmatrix}
\begin{bmatrix}
I_{\left \lfloor \frac n4 \right \rfloor} \otimes
                                     \frac 12 \begin{bmatrix}
                         1 \cr 1 \cr -1 \cr -1
                                      \end{bmatrix}
\end{bmatrix}
\cr 0_{(n-4\left \lfloor \frac n4 \right \rfloor),
          \left \lfloor \frac n4 \right \rfloor}
\end{bmatrix}
\begin{bmatrix}
\ldots
\end{bmatrix}
\begin{bmatrix}
\widehat V
\end{bmatrix}
\end{bmatrix}_{n\times n-1}
\]
i.e.,~the block matrix consisting of $t$ blocks formed from Kronecker products
along with one block $\widehat V$ to complete the appropriate size so that $V^TV=I_{n-1}$, $V^Te=0$.
We take advantage of the $0$, $1$ structure of the Kronecker blocks and
delay the scaling factors till the end. Thus we use the same type of
operation as in \eqref{eq:Vbang} but with $V_1$ and the new scaling
vector $s$.
\end{enumerate}

The results on large scale problems using the two
choices $V_0$ and $V_1$ are reported in Tables~\ref{table:large1}, \ref{table:large2} and \ref{table:large3}.
For simplicity, we only consider random graphs, with various ${\sf imax}$ and $k$. We generate $m$ as described before
and use the commands
\begin{verbatim}
             A=sprandsym(n,dens); A(1:n+1:end)=0; A(abs(A)>0)=1;
\end{verbatim}
to generate a random incidence matrix, with ${\rm dens} = 0.05/i$, for $i=1,\ldots,10$.
In the tables, we present the number of nodes, sets, edges ($n$, $k$, $|E|$),
the true density of the random graph $density:=2|E|/(n(n-1))$, the lower and upper projected eigenvalue bounds,
the relative gap \eqref{eq:relgap}, and the cputime (in seconds) for computing the bounds.


The results using the matrix $V_0$ are in Tables~\ref{table:large1}.
Here the cost for
finding the lower bound using the eigenvalues
becomes significantly higher than the cost for finding the upper bound
using the simplex method.

\begin{table}[h!]
\small
\begin{center}
\begin{tabular}{|r|r|r||r|r|r|r||r|r|r|r|}  \hline
$n$ & $k$ & $|E|$ & density &  lower  & upper
  &   Rel. gap &  cpu (low) &  cpu (up)
\\ \hline
$13685$ & $68$ & $4566914$ & $4.88\times 10^{-2}$ & $3958917$ & $4271928$ & $0.0380$ & $409.4$ & $  7.1$ \\
$13599$ & $65$ & $2282939$ & $2.47\times 10^{-2}$ & $1967979$ & $2181778$ & $0.0515$ & $330.1$ & $  6.1$ \\
$13795$ & $68$ & $1572487$ & $1.65\times 10^{-2}$ & $1314033$ & $1495421$ & $0.0646$ & $316.2$ & $  7.9$ \\
$13249$ & $66$ & $1090447$ & $1.24\times 10^{-2}$ & $832027$ & $985375$ & $0.0844$ & $265.6$ & $  7.4$ \\
$12425$ & $66$ & $767961$ & $9.95\times 10^{-3}$ & $589226$ & $710093$ & $0.0930$ & $253.2$ & $  6.0$ \\
$13913$ & $66$ & $803074$ & $8.30\times 10^{-3}$ & $591486$ & $726783$ & $0.1026$ & $304.9$ & $  7.1$ \\
$14144$ & $65$ & $711936$ & $7.12\times 10^{-3}$ & $543017$ & $666721$ & $0.1023$ & $274.4$ & $  7.1$ \\
$13667$ & $67$ & $581930$ & $6.23\times 10^{-3}$ & $427464$ & $538291$ & $0.1148$ & $254.9$ & $  6.5$ \\
$12821$ & $68$ & $455329$ & $5.54\times 10^{-3}$ & $329902$ & $422417$ & $0.1230$ & $244.5$ & $  7.4$ \\
$12191$ & $69$ & $370595$ & $4.99\times 10^{-3}$ & $262521$ & $343426$ & $0.1335$ & $211.1$ & $  6.3$
\\ \hline
\end{tabular}
\end{center}
\caption{Large scale random graphs; {\sf imax} $400$; $k\in [65,70]$, using $V_0$}
\label{table:large1} \normalsize
\end{table}

The results using the matrix $V_1$ are shown in Tables~\ref{table:large2} and \ref{table:large3}.
We can see the obvious improvement in cputime
when finding the lower bounds using $V_1$ compared to using $V_0$, which becomes more significant when the graph gets sparser.

\begin{table}[h!]
\small
\begin{center}
\begin{tabular}{|r|r|r||r|r|r|r||r|r|r|r|}  \hline
$n$ & $k$ & $|E|$ & density &  lower  & upper
  &   Rel. gap &  cpu (low) &  cpu (up)
\\ \hline
$14680$ & $69$ & $5254939$ & $4.88\times 10^{-2}$ & $4586083$ & $4955524$ & $0.0387$ & $262.9$ & $  6.4$ \\
$14464$ & $65$ & $2583109$ & $2.47\times 10^{-2}$ & $2133187$ & $2397098$ & $0.0583$ & $135.5$ & $  6.0$ \\
$14974$ & $69$ & $1852955$ & $1.65\times 10^{-2}$ & $1555718$ & $1776249$ & $0.0662$ & $ 98.2$ & $  6.9$ \\
$13769$ & $65$ & $1177579$ & $1.24\times 10^{-2}$ & $956260$ & $1124729$ & $0.0810$ & $ 44.4$ & $  5.9$ \\
$13852$ & $69$ & $954632$ & $9.95\times 10^{-3}$ & $775437$ & $924265$ & $0.0876$ & $ 51.3$ & $  6.0$ \\
$12516$ & $65$ & $650028$ & $8.30\times 10^{-3}$ & $475477$ & $598372$ & $0.1144$ & $ 34.0$ & $  4.3$ \\
$13525$ & $66$ & $651025$ & $7.12\times 10^{-3}$ & $508512$ & $630663$ & $0.1072$ & $ 33.3$ & $  5.8$ \\
$13622$ & $66$ & $578111$ & $6.23\times 10^{-3}$ & $414786$ & $535755$ & $0.1273$ & $ 34.6$ & $  6.0$ \\
$13004$ & $65$ & $468437$ & $5.54\times 10^{-3}$ & $328925$ & $434795$ & $0.1386$ & $ 29.1$ & $  5.2$ \\
$14659$ & $69$ & $535899$ & $4.99\times 10^{-3}$ & $380571$ & $501082$ & $0.1367$ & $ 27.2$ & $  5.9$
\\ \hline
\end{tabular}
\end{center}
\caption{Large scale random graphs; {\sf imax} $400$; $k\in [65,70]$, using $V_1$}
\label{table:large2} \normalsize
\end{table}
\begin{table}[h!]
\small
\begin{center}
\begin{tabular}{|r|r|r||r|r|r|r||r|r|r|r|}  \hline
$n$ & $k$ & $|E|$ & density &  lower  & upper
  &   Rel. gap &  cpu (low) &  cpu (up)
\\ \hline
$22840$ & $80$ & $12721604$ & $4.88\times 10^{-2}$ & $11548587$ & $12262688$ & $0.0300$ & $782.4$ & $ 12.5$ \\
$16076$ & $77$ & $3190788$ & $2.47\times 10^{-2}$ & $2754650$ & $3053622$ & $0.0515$ & $199.1$ & $  8.9$ \\
$20635$ & $77$ & $3519170$ & $1.65\times 10^{-2}$ & $2916188$ & $3287657$ & $0.0599$ & $228.5$ & $ 10.1$ \\
$19408$ & $79$ & $2339682$ & $1.24\times 10^{-2}$ & $1989278$ & $2272340$ & $0.0664$ & $147.3$ & $ 10.6$ \\
$17572$ & $76$ & $1536161$ & $9.95\times 10^{-3}$ & $1188933$ & $1417085$ & $0.0875$ & $ 83.6$ & $  9.0$ \\
$18211$ & $80$ & $1376087$ & $8.30\times 10^{-3}$ & $1127696$ & $1336407$ & $0.0847$ & $ 90.7$ & $ 11.2$ \\
$21041$ & $80$ & $1575333$ & $7.12\times 10^{-3}$ & $1232501$ & $1482463$ & $0.0921$ & $ 93.6$ & $ 10.5$ \\
$20661$ & $77$ & $1329856$ & $6.23\times 10^{-3}$ & $1023056$ & $1251437$ & $0.1004$ & $ 74.5$ & $ 11.8$ \\
$19967$ & $77$ & $1104350$ & $5.54\times 10^{-3}$ & $831335$ & $1035126$ & $0.1092$ & $ 74.0$ & $  9.6$ \\
$20839$ & $78$ & $1082982$ & $4.99\times 10^{-3}$ & $831672$ & $1034104$ & $0.1085$ & $ 73.9$ & $ 11.0$
\\ \hline
\end{tabular}
\end{center}
\caption{Large scale random graphs; {\sf imax} $500$; $k\in [75,80]$, using $V_1$}
\label{table:large3} \normalsize
\end{table}

In all three tables, we note that the relative gaps deteriorate as the density decreases.
Also, the cputime for the eigenvalue bound is
significantly better when using $V_1$ suggesting that sparsity of
$V_1$ is better exploited in the MATLAB \emph{eigs} command.

\section{Conclusion}
\label{sect:concl}
In this paper, we presented
eigenvalue, projected eigenvalue, QP,  and SDP
lower and upper bounds for a minimum cut problem.
In particular, we looked at a variant of the projected eigenvalue bound found in \cite{ReLiPi:13} and showed numerically that
our variant is stronger.
We also proposed a new QP bound following the approach in \cite{AnsBrix:99}, making use of a duality result presented in \cite{PoRe:08}.
In addition, we
studied an SDP relaxation and demonstrated its strength by showing the redundancy of
quadratic (orthogonality) constraints.
We emphasize that these techniques for deriving bounds for our cut minimization problem can be adapted to derive new results for the GP. Specifically,
one can easily adapt our derivation and obtain a QP lower bound for the GP, which was not previously known in the literature.
Our derivation of the simple facially reduced SDP relaxation \eqref{eq:SDPfinal} can also be adapted to simplify the existing
SDP relaxation for the GP studied in \cite{WoZh:96}.

We also compared these bounds numerically on randomly generated graphs of various sizes.
Our numerical tests illustrate that the projected eigenvalue bounds
can be found efficiently for large scale sparse problems
and that they compare well against other more expensive bounds on smaller problems.
It is surprising that the projected eigenvalue bounds using the adjacency matrix $A$ are both cheap to calculate and strong.

\cleardoublepage
\addcontentsline{toc}{section}{Index}
\label{ind:index}
\printindex

\addcontentsline{toc}{section}{Bibliography}
\bibliography{.master,.edm,.psd,.bjorBOOK}

\def\cprime{$'$} \def\cprime{$'$} \def\cprime{$'$}
  \def\udot#1{\ifmmode\oalign{$#1$\crcr\hidewidth.\hidewidth
  }\else\oalign{#1\crcr\hidewidth.\hidewidth}\fi} \def\cprime{$'$}
  \def\cprime{$'$} \def\cprime{$'$}
\begin{thebibliography}{10}

\bibitem{AnsBrix:99}
K.M. Anstreicher and N.W. Brixius.
\newblock A new bound for the quadratic assignment problem based on convex
  quadratic programming.
\newblock {\em Math. Program.}, 89(3, Ser. A):341--357, 2001.

\bibitem{AnWo:98}
K.M. Anstreicher and H.~Wolkowicz.
\newblock On {L}agrangian relaxation of quadratic matrix constraints.
\newblock {\em SIAM J. Matrix Anal. Appl.}, 22(1):41--55, 2000.

\bibitem{BaCeCo:93}
E.~Balas, S.~Ceria, and G.~Cornuejols.
\newblock A lift-and-project cutting plane algorithm for mixed 0-1 programs.
\newblock {\em Math. Programming}, 58:295--324, 1993.

\bibitem{bw1}
J.M. Borwein and H.~Wolkowicz.
\newblock Facial reduction for a cone-convex programming problem.
\newblock {\em J. Austral. Math. Soc. Ser. A}, 30(3):369--380, 1980/81.

\bibitem{BrixiusAnstr:01}
N.W. Brixius and K.M. Anstreicher.
\newblock Solving quadratic assignment problems using convex quadratic
  programming relaxations.
\newblock {\em Optim. Methods Softw.}, 16(1-4):49--68, 2001.
\newblock Dedicated to Professor Laurence C. W. Dixon on the occasion of his
  65th birthday.

\bibitem{BrualRys:91}
R.~A. Brualdi and H.~J. Ryser.
\newblock {\em Combinatorial Matrix Theory}.
\newblock Cambridge University Press, New York, 1991.

\bibitem{ScTuWonumeric:07}
Y-L. Cheung, S.~Schurr, and H.~Wolkowicz.
\newblock Preprocessing and regularization for degenerate semidefinite
  programs.
\newblock In D.H. Bailey, H.H. Bauschke, P.~Borwein, F.~Garvan, M.~Thera,
  J.~Vanderwerff, and H.~Wolkowicz, editors, {\em Computational and
  {A}nalytical {M}athematics, {I}n {H}onor of {J}onathan {B}orwein's 60th
  {B}irthday}, volume~50 of {\em Springer Proceedings in Mathematics \&
  Statistics}, pages 225--276. Springer, 2013.

\bibitem{MR3060945}
E.~de~Klerk, M.~E.-Nagy, and R.~Sotirov.
\newblock On semidefinite programming bounds for graph bandwidth.
\newblock {\em Optim. Methods Softw.}, 28(3):485--500, 2013.

\bibitem{MR98m:65001}
J.W. Demmel.
\newblock {\em Applied numerical linear algebra}.
\newblock Society for Industrial and Applied Mathematics (SIAM), Philadelphia,
  PA, 1997.

\bibitem{FaReWo:92}
J.~Falkner, F.~Rendl, and H.~Wolkowicz.
\newblock A computational study of graph partitioning.
\newblock {\em Math. Programming}, 66(2, Ser. A):211--239, 1994.

\bibitem{cvx}
M.~Grant, S.~Boyd, and Y.~Ye.
\newblock Disciplined convex programming.
\newblock In {\em Global optimization}, volume~84 of {\em Nonconvex Optim.
  Appl.}, pages 155--210. Springer, New York, 2006.

\bibitem{HaReWo:89}
S.W. Hadley, F.~Rendl, and H.~Wolkowicz.
\newblock A new lower bound via projection for the quadratic assignment
  problem.
\newblock {\em Math. Oper. Res.}, 17(3):727--739, 1992.

\bibitem{HagerHung:13}
W.W. Hager and J.T. Hungerford.
\newblock A continuous quadratic programming formulation of the vertex
  separator problem.
\newblock Report, University of Florida, Gainesville, 2013.

\bibitem{hw53}
A.J. Hoffman and H.W. Wielandt.
\newblock The variation of the spectrum of a normal matrix.
\newblock {\em Duke Mathematics}, 20:37--39, 1953.

\bibitem{hj85}
R.A. Horn and C.R. Johnson.
\newblock {\em Matrix analysis}.
\newblock Cambridge University Press, Cambridge, 1990.
\newblock Corrected reprint of the 1985 original.

\bibitem{RHLewis:14}
R.H. Lewis.
\newblock Yet another graph partitioning problem is {NP}-{H}ard.
\newblock Report arXiv:1403.5544, [cs.CC], 2014.

\bibitem{LoSc:91}
L.~Lov{\'{a}}sz and A.~Schrijver.
\newblock Cones of matrices and set-functions and 0-1 optimization.
\newblock {\em SIAM J. Optim.}, 1(2):166--190, 1991.

\bibitem{MR2363705}
R.~Mart{\'{\i}}, V.~Campos, and E.~Pi{\~n}ana.
\newblock A branch and bound algorithm for the matrix bandwidth minimization.
\newblock {\em European J. Oper. Res.}, 186(2):513--528, 2008.

\bibitem{PoRe:08}
Janez Povh and Franz Rendl.
\newblock Approximating non-convex quadratic programs by semidefinite and
  copositive programming.
\newblock In {\em K{OI} 2006---11th {I}nternational {C}onference on
  {O}perational {R}esearch}, pages 35--45. Croatian Oper. Res. Soc., Zagreb,
  2008.

\bibitem{ReLiPi:13}
F.~Rendl, A.~Lisser, and M.~Piacentini.
\newblock Bandwidth, vertex separators and eigenvalue optimization.
\newblock In {\em Discrete Geometry and Optimization}, volume~69 of {\em The
  Fields Institute for Research in Mathematical Sciences, Communications
  Series}, pages 249--263. Springer, 2013.

\bibitem{ReWo:89}
F.~Rendl and H.~Wolkowicz.
\newblock Applications of parametric programming and eigenvalue maximization to
  the quadratic assignment problem.
\newblock {\em Math. Programming}, 53(1, Ser. A):63--78, 1992.

\bibitem{ReWo:90}
F.~Rendl and H.~Wolkowicz.
\newblock A projection technique for partitioning the nodes of a graph.
\newblock {\em Ann. Oper. Res.}, 58:155--179, 1995.
\newblock Applied mathematical programming and modeling, II (APMOD 93)
  (Budapest, 1993).

\bibitem{MR874114}
Alexander Schrijver.
\newblock {\em Theory of linear and integer programming}.
\newblock Wiley-Interscience Series in Discrete Mathematics. John Wiley \&
  Sons, Ltd., Chichester, 1986.
\newblock A Wiley-Interscience Publication.

\bibitem{ShAd:96}
H.D. Sherali and W.P. Adams.
\newblock Computational advances using the reformulation-linearization
  technique (rlt) to solve discrete and continuous nonconvex problems.
\newblock {\em Optima}, 49:1--6, 1996.

\bibitem{MR861043}
E.~Tardos.
\newblock A strongly polynomial algorithm to solve combinatorial linear
  programs.
\newblock {\em Oper. Res.}, 34(2):250--256, 1986.

\bibitem{MR1159330}
E.~Tardos.
\newblock Strongly polynomial and combinatorial algorithms in optimization.
\newblock In {\em Proceedings of the {I}nternational {C}ongress of
  {M}athematicians, {V}ol.\ {I}, {II} ({K}yoto, 1990)}, pages 1467--1478,
  Tokyo, 1991. Math. Soc. Japan.

\bibitem{MR1976479}
R.~H. T{\"u}t{\"u}nc{\"u}, K.~C. Toh, and M.~J. Todd.
\newblock Solving semidefinite-quadratic-linear programs using {SDPT}3.
\newblock {\em Math. Program.}, 95(2, Ser. B):189--217, 2003.
\newblock Computational semidefinite and second order cone programming: the
  state of the art.

\bibitem{WoZh:96}
H.~Wolkowicz and Q.~Zhao.
\newblock Semidefinite programming relaxations for the graph partitioning
  problem.
\newblock {\em Discrete Appl. Math.}, 96/97:461--479, 1999.
\newblock Selected for the special Editors' Choice, Edition 1999.

\bibitem{KaReWoZh:94}
Q.~Zhao, S.E. Karisch, F.~Rendl, and H.~Wolkowicz.
\newblock Semidefinite programming relaxations for the quadratic assignment
  problem.
\newblock {\em J. Comb. Optim.}, 2(1):71--109, 1998.
\newblock Semidefinite programming and interior-point approaches for
  combinatorial optimization problems (Fields Institute, Toronto, ON, 1996).

\end{thebibliography}

\end{document}